\documentclass[11pt]{article}

\usepackage{amsmath}
\usepackage{epsfig, amssymb, amsfonts}
\usepackage{amsthm}
\usepackage{amstext}
\usepackage{amsopn}
\usepackage{mathrsfs}
\usepackage{subfigure}
\usepackage{graphicx}
\usepackage[left=2.3cm,top=2cm,right=2.3cm,bottom=2cm, nohead,foot=1cm]{geometry}
\numberwithin{equation}{section}

\newtheorem{theorem}{Theorem}[section]
\newtheorem{identities}[theorem]{Identities}
\newtheorem{lemma}[theorem]{Lemma}

\newtheorem{remark}[theorem]{Remark}
\newtheorem{proposition}[theorem]{Proposition}

\newcommand{\R}{{\mathbb R}}

\newcommand{\C}{{\mathbb C}}

\title{Optimizing a variable-rate diffusion  to hit an infinitesimal target  at a  set time}

\author{\textbf{Jeremy Thane Clark}\footnote{jtclark@math.msu.edu} \\  Department of Mathematics, Michigan State University  \\ East Lansing, MI 48824, USA }

\begin{document}
\maketitle

\begin{abstract}

I consider a stochastic optimization problem for a  time-changed Bessel process whose diffusion rate is constrained to be between two  positive values $r_{1}<r_{2}$.  The problem is to find an optimal adapted strategy for the choice of diffusion rate in order to maximize the chance of hitting an infinitesimal region around the origin at a set time in the future.  More precisely, the parameter associated with ``the chance of hitting the origin" is the exponent for a singularity induced at the origin of the final time probability density.  I show that the optimal exponent solves a transcendental equation depending on the ratio $\frac{r_{2}}{r_{1}}$ and the dimension of the Bessel process.

\end{abstract}

\noindent \textbf{Keywords:} Bessel process, stochastic optimization, perturbation theory, principle eigenvalue for a fully nonlinear elliptic operator

\section{Introduction}

Pick $a\in \R^n$ and positive numbers $r_{1}$, $r_{2}$, $T$ with $r_{1}< r_{2}$.      For a Borel measurable function $D:\R^+\times [0,T]\rightarrow [r_{1},r_{2}]$,  let   $X_{t}\in \R^n$ be the  weak solution  to  the  stochastic differential equation
\begin{align}\label{Diffuse}
  dX_{t}^{j}= \sqrt{D\big(|X_{t}|,t\big)}d\mathbf{B}_{t}^{j}, \hspace{1.5cm} X_{0}=a, \quad 1\leq j\leq n, \quad  t\in [0,T] , \end{align}
for a standard $n$-dimensional Brownian motion $\mathbf{B}_{t}$. 
 In broad terms the question I address in this article is the following: What choice of diffusion coefficient  maximizes the probability that  $X_{t}$ lands in an infinitesimal region around the origin at the final time $T$ given the constraint $r_{1}\leq D(x,t)\leq r_{2}$?    
Before stating the problem more precisely, I will switch to a framework that allows for fractional dimensions $n\in \R^{+}$.  
Since the setup above is spherically symmetric, it is natural to postulate the problem in terms of the time-changed, n-dimensional Bessel process $\mathbf{x}_{t}:=|X_{t}|\in \R^{+}$, which has transition densities $\mathcal{P}^{(D)}_{y,t}\in L^{1}(\R^{+})$  satisfying the forward Kolmogorov equation
\begin{align}\label{EqBessel}
\frac{d}{dt}\mathcal{P}^{(D)}_{y,t}(x)=-\frac{1}{2}\frac{d}{dx}\Big(\frac{n-1}{x}D(x,t)\mathcal{P}^{(D)}_{y,t}(x)    \Big) +\frac{1}{2}\frac{d^2}{dx^{2}}\Big(D(x,t)\mathcal{P}^{(D)}_{y,t}(x) \Big), \quad \mathcal{P}^{(D)}_{y,0}(x):=\delta_{y}(x),
\end{align}
for $x\in \R^+$, $y=|a|$, and $t\in [0,T]$.   If the diffusion coefficient is constant, i.e.,  $D(x,t)=r>0$ for all $(x,t)$, then $\mathbf{x}_{t}$ is a Bessel process and  the behavior of the final time density for  $x\ll 1$ will be $\mathcal{P}^{(D)}_{y,T}(x)\sim x^{\epsilon}$, where $\epsilon=n-1$; see~\cite{Revuz} for an explicit expression for the transition semigroup of  a Bessel process.

   It turns out that maximizing the chance of landing in ``an infinitesimal region around the origin" at time $T$ does not merely mean maximizing the coefficient in front of the asymptotic power $x^{\epsilon}$,  because smaller values of $\epsilon$ can be attained through better choices of $D(x,t)$.    Thus the problem shifts to minimizing the exponent of the asymptotic power law  for  $\mathcal{P}_{y,T}^{(D)}(x)$ at the origin, which I will  characterize through the  limit
\begin{align}\label{JizJot}
\hspace{3cm}\mathcal{I}(D)=\liminf_{\epsilon\searrow 0}\left\{n-\frac{\log\Big(\int_{[0, \epsilon]}dx \mathcal{P}^{(D)}_{y,T}(x)  \Big)}{  \log(\epsilon) }\right\}\in (-\infty,n).   
\end{align}  
This definition is designed so that $I(D)$ is the improvement of the asymptotic power  over the case in which  $D(x,t)$ is constant:   If $ \mathcal{P}^{(D)}_{y,T}(x) \sim x^{n-1-\eta}$ around $x=0$ for $\eta>0$, then $\mathcal{I}(D)=\eta$, and, in  
particular, $\mathcal{I}(D)=0$ when $D(x,t)$ is constant by the observation above.  My focus in this article will be primarily on dimensions $n\in (0,2)$ since some of the formulas that I use blow up for $n\geq 2$.   It is not surprising that a transition in behavior should occur around dimension $n=2$, where Bessel processes transition from recurrent to transient.

If we think of $D(x,t)$ as the strategy of a random walker $\mathbf{x}_{t}$ attempting to maximize his chance of arriving at the origin at time $T$, it is reasonable that he should rush with the maximum diffusion rate $r_{2}$ when he judges himself to be  far given the time remaining, and he should  choose to bide his time with the minimum diffusion rate $r_{1}$ when he judges himself to be close.   Thus it is natural to have  $D(x,t)\nearrow r_{2}$ as $x\nearrow \infty$ for each $t\in [0,T)$.    
   Since  $\mathbf{x}_{t}$ is a time-changed Bessel process with diffusion rates restricted to the interval $[r_{1},r_{2}]$,  my optimization problem inherits a  scale-invariance when viewed from the origin and the final time $T$; the random walker should make the same choice of diffusion rate at space-time points $(x,t)$ and $(x',t')$  in $\R^{+}\times [0,T)$ for which
$  \frac{x^{2} }{T-t} = \frac{x'^{2}}{T-t'} $.
Any strategy $ D(x,t)$ consistent with the above scale-invariance satisfies  
\begin{align}\label{DiffCoef}
 D(x,t)=D\Big(x\sqrt{\frac{T-t'}{T-t} }  , t'\Big),  \hspace{2cm} t,t'\in [0,T).  
\end{align}  
For the above reasons, I will focus my analysis on  diffusion coefficients of the form $D(x,t)=R\big(\frac{x}{\sqrt{T-t}}\big)$ for measurable functions $R:\R^{+}\rightarrow [r_{1},r_{2}]$ with $\lim_{z\rightarrow \infty}R(z)=r_{2}$.  I denote the set of such $R$ by $ B_{r_{1},r_{2}}$.

Theorem~\ref{ThmMain} is the main result of this article.   To state the result we need to define positive numbers $\kappa$ and $\eta$ solving the pair of equations~(\ref{Krakow}), which depend on the dimension $n\in (0,2)$ and the diffusion bounds $r_{1}$, $r_{2}$ through their ratio $V:=\sqrt{\frac{r_{2}}{r_{1}}}$.  For $\eta>0$ and $\nu>-1$ define 
$ S_{ \nu  }^{-}(x) :=x^{-\nu  }I_{\nu}(x) $  and $ S_{ \nu  }^{+}(x) :=x^{-\nu  }K_{\nu}(x)$, 
where $I_{\nu}, K_{\nu}:\R^{+}\rightarrow \R^{+}$ are modified Bessel functions of the first and second kind, respectively. 
Define the functions $Y_{\nu, \eta}^{\pm}:\R^+\rightarrow \R^{+}$ by 
$Y_{\nu, \eta }^{\pm}(x):= \int_{0}^{\infty}dz z^{\eta-1}S_{ \nu  }^{\pm}(xz)e^{-\frac{z^2+x^2}{2}}.  $
Given $V\in \R^{+}$ let the constants $\eta \equiv \eta(n,V)$ and  $\kappa\equiv \kappa(n,V)$ be determined by 
\begin{align}\label{Krakow}
\frac{Y_{\frac{n}{2},\eta+2 }^+(\frac{\kappa}{V})}{ Y_{\frac{n-2}{2},\eta }^+(\frac{\kappa}{V})  } =\frac{n-\eta-\frac{\kappa^{2}}{V^{2}}    }{ \frac{ \kappa^2  }{V^2}}  \hspace{1cm} \text{and} \hspace{1cm}
\frac{\kappa^{2-n}\int_{0}^{\kappa}da \, Y_{\frac{n-2}{2},\eta }^-(a)\, a^{n-1} e^{\frac{a^{2}-\kappa^2}{2}} }{  Y_{\frac{n-2}{2},\eta }^-(\kappa)   }= 1.
\end{align}
Note that because  $ K_{\nu}(x)\sim x^{-|\nu|}$ for  $0<x\ll 1$ the  integrals defining $Y_{\frac{n}{2},\eta+2 }^+$ and  $Y_{\frac{n-2}{2},\eta }^+$ blow up around zero when $n\geq 2$.

\begin{theorem}\label{ThmMain}
Fix $y\in \R^{+}$ and positive numbers $T$, $r_{1}$, $r_{2}$  with $r_{2}> r_{1}$.   Let $\mathcal{P}^{(R)}_{y,t}\in L^{1}(\R^+)$ obey the Kolmogorov equation~(\ref{EqBessel}) with $D(x,t)=R\big(\frac{x}{\sqrt{T-t}}\big)$.  For $V:=\sqrt{\frac{r_{2}}{r_{1}}}$ the following equality holds:
\begin{align}\label{Eta}
\eta(n, V)=\max_{ R\in B_{r_{1},r_{2}  } } \lim_{\epsilon\searrow 0}\left\{ n-\frac{\log\Big(\int_{[0. \epsilon]}dx \mathcal{P}^{(R)}_{y,T}(x)  \Big)}{  \log(\epsilon) }\right\}.
 \end{align} 
The above maximum is attained uniquely for $R^{*}:\R^{+}  \rightarrow [r_{1},r_{2}]$ of the form
\begin{align}\label{OptimalAr}
 R^{*}(x):=  r_{1}\chi\Big( \frac{ x}{\sqrt{r_{1}}}   \leq   \kappa (n,V)  \Big)+r_{2}\chi\Big(  \frac{x}{\sqrt{r_{1}}}   >   \kappa (n,V)  \Big) .    
\end{align}

\end{theorem}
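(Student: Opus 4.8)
The plan is to reduce the optimization over strategies $R \in B_{r_1,r_2}$ to a principal-eigenvalue problem for a fully nonlinear elliptic operator, exploit the scale-invariance built into \eqref{DiffCoef} to pass to self-similar variables, and then solve that eigenvalue problem explicitly in terms of the functions $Y^{\pm}_{\nu,\eta}$. First I would make the self-similar change of variables $z = x/\sqrt{T-t}$; under \eqref{EqBessel} with $D(x,t)=R(x/\sqrt{T-t})$, the quantity $\mathcal{P}^{(R)}_{y,T}(x)$ near the origin is governed, via a small-$x$ (equivalently, $t\nearrow T$) asymptotic analysis, by the leading eigenfunction of the stationary operator obtained after this substitution. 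Concretely, writing $\mathcal{P}^{(R)}_{y,t}(x)\approx (T-t)^{-\gamma}\,u\big(x/\sqrt{T-t}\big)$ for the dominant self-similar mode, $u$ should solve a second-order ODE on $\R^+$ of Ornstein–Uhlenbeck–Bessel type with the piecewise-defined coefficient $R$, and the sought exponent is $n-2\gamma$ up to the bookkeeping in \eqref{JizJot}. The functions $S^{\pm}_\nu$ and $Y^{\pm}_{\nu,\eta}$ are exactly the natural solution bases: $Y^{-}$-type solutions are the ones regular at the origin, $Y^{+}$-type the ones with the correct decay at infinity, and the Gaussian weight $e^{-(z^2+x^2)/2}$ is the signature of the OU drift coming from the $\sqrt{T-t}$ rescaling.

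Next I would set up the variational/comparison principle that identifies the optimal $R^*$. Because the objective is monotone in the diffusion rate in a way that depends on position — one wants to go fast when far and slow when close, per the heuristic in the introduction — the Bellman/HJB structure of the problem forces the optimal control to be \emph{bang-bang}: $R^*$ takes only the extreme values $r_1, r_2$, with a single switching threshold. The scale-invariance then pins the threshold to a fixed value of $x/\sqrt{r_1}$ (the $\sqrt{r_1}$ normalization makes the inner region a genuine Bessel process run at unit rate in rescaled coordinates), which is the constant $\kappa(n,V)$. So the proof of \eqref{OptimalAr} amounts to: (i) showing any optimizer must be of this one-threshold bang-bang form (a convexity/monotonicity argument on the relevant elliptic operator, or an exchange argument showing that swapping $r_1\leftrightarrow r_2$ across a mis-placed interval strictly improves the exponent), and (ii) computing, for a threshold at $z=\kappa$, the resulting exponent as a function of $\kappa$ and then optimizing over $\kappa$.

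For step (ii) I would solve the ODE separately on $(0,\kappa)$ with rate $r_1$ and on $(\kappa,\infty)$ with rate $r_2$: on the inner region the regular solution is the $Y^{-}_{(n-2)/2,\eta}$ combination (Bessel index $\tfrac{n-2}{2}$, as dictated by dimension $n$), on the outer region the admissible solution is the $Y^{+}_{(n-2)/2,\eta}$ combination evaluated at the rescaled argument $\kappa/V$ (the $1/V$ is the change of diffusion scale $r_1\to r_2$). Matching the function and its flux (the natural $C^1$-type gluing for the divergence-form operator \eqref{EqBessel}) across $z=\kappa$ yields precisely the two transcendental relations \eqref{Krakow}: the first is the logarithmic-derivative matching at the interface — the ratio $Y^{+}_{n/2,\eta+2}/Y^{+}_{(n-2)/2,\eta}$ arises from differentiating $S^+_\nu$ and using the Bessel recursion $\tfrac{d}{dx}\big(x^{-\nu}K_\nu(x)\big) = -x^{-\nu}K_{\nu+1}(x)$ — while the second is the normalization/criticality condition ensuring the glued eigenfunction is the genuine principal mode (no sign change, correct integrability), which is where the optimality of $\kappa$ over competing thresholds enters. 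The exponent $\eta$ is then read off from the index of the regular solution at the origin, giving \eqref{Eta}. Uniqueness of $R^*$ follows from strict monotonicity in the eigenvalue comparison: a single $\eta$ and hence a single $\kappa$ solve \eqref{Krakow} in the relevant range, and any deviation from the bang-bang threshold strictly raises the origin exponent.

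The main obstacle I anticipate is \emph{rigorously} connecting the $\liminf$ in \eqref{JizJot}/\eqref{Eta} — which is a statement about the literal small-$\epsilon$ behavior of $\int_0^\epsilon \mathcal{P}^{(R)}_{y,T}(x)\,dx$ for the true (non-self-similar-at-fixed-$T$) density — to the self-similar eigenvalue. One must justify that the genuine transition density, as $t\nearrow T$, is asymptotically dominated by the leading self-similar mode uniformly enough that the logarithmic asymptotics transfer, and that subleading modes and the influence of the initial point $y$ do not corrupt the exponent. This requires some care with the spectral decomposition of the rescaled (non-self-adjoint, but similar to self-adjoint after a Gaussian gauge) generator on $L^2(\R^+)$ with the appropriate weight, and with showing the principal eigenvalue is simple and isolated. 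A secondary technical point is handling the non-smoothness of $R^*$ at the switch — but since \eqref{EqBessel} is in divergence form, weak solutions are $C^1$ across the interface and the matching conditions are the standard ones, so this is a bookkeeping issue rather than a genuine difficulty. The restriction $n<2$ is exactly what keeps the $Y^+$ integrals convergent at the origin and the inner Bessel solution recurrent, which is essential for the eigenfunction to be normalizable the way the argument needs.
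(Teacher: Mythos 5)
Your overall strategy coincides with the paper's: pass to the self-similar variable $z=x/\sqrt{T-t}$, reduce to a principal-eigenvalue problem for $\mathcal{L}^{(R)}=x\frac{d}{dx}+R(x)\Delta_{n}$, argue the optimizer is bang-bang with a single threshold, and determine $(\eta,\kappa)$ by solving the two-phase ODE in the $Y^{\pm}$ bases. However, the two hardest steps are left at the level of intent, and one structural claim is wrong. First, the passage from spectral data of the rescaled generator to the literal $\epsilon\searrow 0$ asymptotics in \eqref{Eta} is not settled by a self-similar ansatz $\mathcal{P}^{(R)}_{y,t}(x)\approx (T-t)^{-\gamma}u\big(x/\sqrt{T-t}\big)$: the quantity in \eqref{Eta} concerns the fixed final-time density, reached only along the degenerating rescaling, and you yourself flag this as the main obstacle without supplying a mechanism. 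The paper's resolution is an exact identity, not asymptotic matching: by the Markov property and the scale invariance $F^{(R)}_{\epsilon,t}(x)=F^{(R)}_{\lambda\epsilon,\lambda^{2}t}(\lambda x)$ one gets $\int_{[0,\epsilon]}dx\,\mathcal{P}^{(R)}_{y,T}(x)=\int_{\R^{+}}da\,\psi^{(R)}_{b,s}(a)F^{(R)}_{1,1}(a)$ with $b=y/\sqrt{T}$ and $s=\log(T/\epsilon^{2})$, after which part (4) of Prop.~\ref{PropBasics} applies once $F^{(R)}_{1,1}$ is sandwiched between functions $G^{(R)}_{h}$ proved to lie in the operator domain (Lem.~\ref{H}, which requires a genuine Gaussian tail estimate via the stochastic integral representation of $e^{-s/2}Z_{s}$). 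Also, no ``Gaussian gauge to cure non-self-adjointness'' is needed: $\mathcal{L}^{(R)}$ is self-adjoint on the weighted space of Prop.~\ref{LemSelfAdj}, and simplicity and isolation of the principal eigenvalue follow from compactness of the resolvent and positivity of the Green kernel.

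Second, the bang-bang reduction is asserted (``HJB structure,'' ``convexity/monotonicity, or an exchange argument'') but not proved, and since the objective is an eigenvalue rather than the expectation of a fixed functional, there is no obvious quantity to ``exchange.'' The paper's argument is first-order eigenvalue perturbation: $\overline{\Sigma}\big(\mathcal{L}^{(R+hA)}\big)=\overline{\Sigma}\big(\mathcal{L}^{(R)}\big)+h\big\langle \phi\big|A(x)\Delta_{n}\phi\big\rangle+\mathit{o}(h)$, with the sign of the gain controlled by the structural fact, proved in Prop.~\ref{PropBasics}(3) by an ODE argument, that the principal eigenfunction is positive, strictly decreasing, and has a \emph{unique} radial inflection point $\mathbf{c}$, so $\Delta_{n}\phi<0$ below $\mathbf{c}$ and $>0$ above; pushing $R$ toward $\ell_{\mathbf{c}}$ strictly improves unless $R=\ell_{\mathbf{c}}$ already. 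Your proposal never identifies this uniqueness-of-inflection lemma, without which the switching argument does not close, and it does not address existence of a maximizing threshold (continuity of $\mathfrak{c}\mapsto\overline{\Sigma}(\mathcal{L}_{\mathfrak{c}})$ with boundary values $-n$ at $\mathfrak{c}=0,\infty$). Finally, your reading of \eqref{Krakow} is off: $C^{1}$ matching of the inner $Y^{-}$ and outer $Y^{+}$ solutions gives only \emph{one} relation between $\eta$ and $\kappa$ (it determines $E_{\mathfrak{c}}$ for every threshold, optimal or not); the second relation is not a normalization, positivity, or integrability condition --- the matched eigenfunction is automatically the positive principal mode for each fixed $\mathfrak{c}$ --- but the stationarity condition $\frac{dE_{\mathfrak{c}}}{d\mathfrak{c}}=0$, equivalently $(\Delta_{n}\phi_{\mathfrak{c}})(\mathfrak{c})=0$ from both sides of the interface, which is what produces the pair \eqref{NEta} and, after the Bessel-identity manipulations, the pair \eqref{Krakow}.
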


\begin{remark}
It is instructive to examine the limiting behavior of the exponent $\eta(n,V)$ and the cut-off parameter $\kappa(n,V)$ characterizing the optimal solution in the respective limits $V \searrow 1$ and $V\nearrow \infty$.  
 One surprise is that for large $V$ the optimal cut-off  $\kappa(n,V)$  approaches a finite value $\overline{\kappa}_{n}\in \R^{+}$ solving  the equation 
$$  1=\overline{\kappa}_{n}^{2-n}\int_{0}^{\overline{\kappa}_{n}}da a^{n-1} e^{\frac{a^2-\overline{\kappa}_{n}^2}{2}}. $$  
Note that $\overline{\kappa}_{n}$ increases over the interval $n\in (0,2)$ and has the limiting behavior
$$ \frac{  \overline{\kappa}_{n} }{\sqrt{n}}\longrightarrow 1 \quad  \text{as} \quad n\searrow 0\hspace{1cm}\text{and} \hspace{1cm}\frac{\overline{\kappa}_{n} }{\sqrt{2\ln\big( \frac{1}{2-n}  \big)  } }\longrightarrow 1 \quad \text{as}\quad n\nearrow 2.  $$

The values $\eta(n,V)$, $\kappa(n,V)$ have the following characteristics for each $n\in (0,2)$.    
\begin{enumerate}

\item  $\eta(n,V)$, $\kappa(n,V)$ increase continuously with the parameter $V\in (1,\infty)$.  

\item  As $V\searrow 1$,  
$$\eta(n,V)\searrow 0 \hspace{1cm}\text{and} \hspace{1cm} \kappa(n,V)\searrow \sqrt{n} .$$ 

\item As $V\nearrow \infty$, 
$$ \eta(n,V)\nearrow n  \hspace{1cm} \text{and}\hspace{1cm} \kappa(n,V)\nearrow \overline{\kappa}_{n}  . $$

\item Moreover, for large $V$,
$$   n-\eta(n,V)\propto V^{n-2}\hspace{1cm}\text{and}\hspace{1cm}  \overline{\kappa}_{n}-\kappa(n,V)\propto V^{n-2}.  $$

\end{enumerate}

By item 3 the exponent $\eta(n,V)$ approaches its upper limit $n$ as the ratio $V=\frac{\sqrt{r_{2}}}{ \sqrt{r_{1}} }$  goes to infinity, however, item 4 illustrates that this convergence occurs more slowly as $n$ gets closer to $2$.    This is not surprising since, intuitively, the Bessel process becomes more weakly recurrent as $n$ approaches $2$, and a  higher value of $r_{2}$ in comparison to $r_{1}$  is needed to speed up the returns of the random walk to the region around the origin.

\end{remark}

\subsection{Further Discussion}

Borkar~\cite{Borkar} and Fleming~\cite{Fleming} are reference books for optimization in stochastic settings. 
The optimization problem described above focuses on maximizing the probability of certain vanishingly low chance events.  In particular there is no penalty  for landing far from the target region.  It is a much different problem, for instance, to minimize a quantity of the form
\begin{align}\label{Hamlet}
\widetilde{\mathcal{I}}_{y,T}(D)=  \int_{\R^+}dx  \mathcal{P}_{y,T}^{(D)}(x) \varphi(x),
\end{align}
where  $\mathcal{P}_{y,T}^{(D)}$ is defined as in~(\ref{EqBessel}) and $\varphi:\R^+\rightarrow \R^{+}$ is a convex function quantifying the penalty for landing away from the target point   at the final time $T$.   When $D(x,t)$ is restricted to the range $[r_{1},r_{2}]$, the optimal strategy for the penalty problem is simply to always use the lowest available diffusion rate $r_{1}$.   If the goal is to maximize~(\ref{Hamlet}) for a given target function $\varphi:\R^+\rightarrow \R^{+}$, e.g., $\varphi(x)=1_{[0, 1]}(x)$, then the maximizing strategy can be formally derived from the solution of a nonlinear differential equation;  the optimal, maximizing strategy $D^{*}(x,t)$ should  have the form
\begin{align}\label{Dingbell}
  D^{*}(x,t)=  r_{1}\chi\Big(    \big(\Delta_{n}G\big)(x,T-t) \leq 0  \Big)    +  r_{2}\chi\Big(     \big(\Delta_{n}G\big)(x,T-t) > 0  \Big) ,
\end{align}
where $\Delta_{n}:=\frac{n-1}{x}\frac{d }{d x}+\frac{d^2}{d x^2}$ is the $n$-dimensional spherical Laplacian and $G:\R^{+}\times [0,T]\rightarrow \R^{+}$ is the solution to the nonlinear backwards Kolmogorov equation
$$      \frac{d}{dt}G(x,t)=   \frac{1}{2} \Big[ r_{1}\chi\Big(    \big(\Delta_{n}G\big)(x,t) \leq 0  \Big)    +  r_{2}\chi\Big(    \big(\Delta_{n}G\big)(x,t) > 0  \Big) \Big]\big(\Delta_{n}G\big)(x,t)   $$
 with initial condition  $G(x,0)= \varphi(x) $.   The form of the maximizing strategy~(\ref{Dingbell}) is a consistency requirement since 
any strategy not satisfying~(\ref{Dingbell}) will admit a locally perturbated strategy $D'=D^{*}+dD^{*}$ yielding a small improvement in the value $\widetilde{\mathcal{I}}_{y,T}(D')$.

My interest, however, is in the largest possible exponent with which~(\ref{Hamlet}) decays as the target function shrinks, i.e., $\varphi^{(\epsilon)}(x):=1_{[0, \epsilon]}(x)$ for $0<\epsilon\ll 1 $.   Through a space-time transformation and some analysis, my optimization problem amounts to finding the $R\in B_{r_{1},r_{2}}$\footnote{Recall that $B_{r_{1},r_{2}}$ is defined as the space of measurable functions from $\R^{+}$ to $[r_{1},r_{2}]$ satisfying $\lim_{z\rightarrow \infty}R(z)=r_{2}$.  } that maximizes the principle eigenvalue for differential  operators $\mathcal{L}^{(R)}$ defined over certain weighted $L^{2}$-spaces and having the form
\begin{align}\label{Olf}
  \mathcal{L}^{(R)}:=x\frac{d}{dx}+R(x)\Delta_{n} .
\end{align}
The maximized principle eigenvalue is $\lambda_{r_{1},r_{2}}^{(n)}:=\eta(n,V)-n$ and the corresponding eigenvector is
\begin{align*}
\phi_{r_{1},r_{2}}^{(n)}(x):= \gamma Y_{\frac{n-2}{2},\eta(n ,V) }^{-}\left( \frac{x}{\sqrt{r_{1}} }\right)\chi\left(  \frac{x }{ \sqrt{r_{1}} }\leq \kappa(n,V)   \right) +  Y_{\frac{n-2}{2}, \eta(n,V) }^+\left(\frac{x}{\sqrt{r_{2}} }\right)\chi\left(  \frac{x}{\sqrt{r_{1}}    } >  \kappa(n,V)\right) 
\end{align*}
for $\gamma>0$ chosen to make the function continuous at $x=\sqrt{r_{1}}\kappa(n,V)$.  The maximized principle eigenvalue also serves as the principle eigenvalue for a fully nonlinear elliptic operator:
\begin{align}\label{Dis}
  F_{r_{1},r_{2}}\left(x, \frac{d \phi_{r_{1},r_{2}}^{(n)} }{d x}   , \Delta_{n} \phi_{r_{1},r_{2}}^{(n)} \right)= \lambda_{r_{1},r_{2}}^{(n)}\phi_{r_{1},r_{2}}^{(n)} ,
\end{align} 
where $F_{r_{1},r_{2}}:\R^{+}\times \R^{2}$ decays at infinity and 
\begin{align*}
\hspace{3cm}F_{r_{1},r_{2}}(x,y,z):= \frac{xy}{2}+ \frac{z}{2}\Big[ r_{1}\chi(z<0)+ r_{2}\chi(z\geq 0)\Big]. 
\end{align*}
A basic discussion of  principle eigenvalues for linear  elliptic operators can be found in Pinsky's book~\cite{Pinsky}.  Theory on principle eigenvalues for fully nonlinear elliptic operators is developed  in~\cite{Pucci,Busca,Quaas,Birindelli,Birindelli2,Bayraktar}.   In particular, the eigenvalue problems studied in~\cite{Pucci,Busca, Bayraktar} are similar in character to~(\ref{Dis}) except with  $\Delta_{n}$ replaced by the second derivative (and generalized to arbitrary dimension). 
The theory in~\cite{Bayraktar} is applied to a problem suggested
in~\cite{Kardaras} regarding robust asymptotic growth rates for financial derivatives with unknown  underlying volatility rates.   \vspace{.25cm}

Note that if the problem is to maximize the expected amount of time that the random walker spends in the interval $[0,\epsilon]$ up to time $T$, then the problem becomes trivial and improved exponents can not be attained by using variable diffusion coefficients,  $D(x,t)\in [r_{1},r_{2}]$.   The optimal strategy is obviously to linger when in $[0,\epsilon]$ and hurry when in $(\epsilon,\infty)$:   $D(x,t)=r_{1}\chi(x\leq \epsilon)+r_{2}\chi(x>\epsilon)$.   Moreover, by thinking of $\mathbf{x}_{t}$ as a stochastic time-change of the Bessel process $\mathbf{\widehat{x}}_{t}$ with $D(x,t)=r_{1}$,  I am lead to  the bound   
\begin{align}
\mathbb{E}_{y}\bigg[ \int_{0}^{T}dt\chi\big(\mathbf{x}_{t}\leq \epsilon      \big)     \bigg]\leq  \mathbb{E}_{y}\bigg[ \int_{0}^{ T\frac{r_{2}}{r_{1}} }dt\chi\big(\mathbf{\widehat{x}}_{t}\leq \epsilon      \big)     \bigg]\propto \epsilon^{n} , \hspace{1cm} \epsilon \ll 1,
\end{align}
  Thus the shrinking target zone is not interesting for this problem.

The remainder of this article is organized as follows:
\begin{itemize}

\item In Sect. 2  I introduce the simple space-time transformation that links the original time-changed Bessel process to a stationary dynamics generated by operators of the form~(\ref{Olf}).  Except for the proof of Thm.~\ref{ThmMain}, all of the remaining parts of this article concern results for $\mathcal{L}^{(R)}$.

\item Section 3 establishes the self-adjointness of $\mathcal{L}^{(R)}$ in a weighted Hilbert space and derives some general results for the principle eigenvalue and its corresponding eigenfunction.

\item In Sect. 4 I show that the problem of maximizing the principle eigenvalue for operators $\mathcal{L}^{(R)}$ can be restricted to the class of $R:\mathbb{R}^{+}\rightarrow [r_{1},r_{2}]$ of the form $R(x)=r_{1}\chi(x\leq \frak{c})+r_{2}\chi(x>\frak{c})$ for some $\frak{c}>0$.  In terms of the random walker, this implies that an optimizing strategy should always switch between the extremal diffusion rates $r_{1}$ and $r_{2}$.    I also derive that the maximal possible principle eigenvalue is $\eta(n,V)-n$ and occurs when the cut-off is $\frak{c}=r_{1}\kappa(n,V)$.

\item Section 5 contains the proof of Thm. 1.1.

\end{itemize}

\section{The stationary dynamics}

The restriction of the diffusion coefficient $D(x,t)$ to the parabolic form $R\big(\frac{x}{\sqrt{T-t}}\big)$ for a measurable function $R:\R^{+}\rightarrow [r_{1},r_{2}]$ implies that a solution to the Kolmogorov equation~(\ref{EqBessel}) is equivalent under a time-space reparameterization to the solution of a stationary dynamics~(\ref{DiffEq}).  For $(x,t)\in \R^{+}\times [0,T)$ let $(z,s)\in \R^{+}\times \R^{+}$ be given by
\begin{align}\label{Trans}
(x,t) \quad \longrightarrow \quad (z,s)=\Big( \frac{x}{\sqrt{T-t}},  \log\Big(\frac{T}{T-t} \Big)   \Big).
\end{align}
Through the transformation~(\ref{Trans}), we can use $\mathcal{P}^{(R)}_{y,T}(x)$ to define new probability densities $\psi_{b,s}^{(R)}(z):=  \sqrt{T} e^{-\frac{1}{2}s} \mathcal{P}^{(R)}_{\sqrt{T}b,\, T-Te^{-s}}\big(\sqrt{T}e^{-\frac{1}{2}s}z\big)$ satisfying the forward equation
\begin{align}\label{DiffEq}
\frac{d}{ds}\psi_{b,s}^{(R)}(z)=-\frac{1}{2}\frac{d}{dz}\Big( z  \psi_{b,s}^{(R)}(z)\Big)-\frac{1}{2}\frac{d}{dz}\Big(R(z)\frac{n-1}{z}   \psi_{b,s}^{(R)}(z)\Big)+\frac{1}{2}\frac{d^2}{dz^2}\Big(R(z) \psi_{b,s}^{(R)}(z)  \Big),
\end{align}
where $b:=\frac{y}{\sqrt{T}}$, $s\in [0,\infty)$, and $\psi_{b,0 }^{(R)}(z)=\delta_{b}(z)$.  The backward Kolmogorov generator is thus $\frac{1}{2}\mathcal{L}^{(R)}$ for $\mathcal{L}^{(R)}:=x\frac{d}{dx}+R(x)\Delta_{n}$.  The diffusion process $Z_{t}$ corresponding to~(\ref{DiffEq}) has a repulsive drift that grows proportionatly to the distance from the origin:
\begin{align}\label{D}
dZ_{s}=\frac{Z_{s}}{2}ds+R(Z_{s})\frac{n-1}{2Z_{s}}ds+\sqrt{R(Z_{s})}d\mathbf{B}_{s}',\hspace{1cm}Z_{0}=b,
\end{align}
where $\mathbf{B}_{s}'$ is a copy of standard Brownian motion.  When $R(z)$ is a constant function, $Z_{s}$ is an $n$-dimensional radial Ornstein-Uhlenbeck process; see~\cite{Anja} or~\cite{Revuz} for discussion of radial Ornstein-Uhlenbeck processes.   In the next section, I will show that the generator  $\frac{1}{2}\mathcal{L}^{(R)}$ is self-adjoint when assigned the appropriate domain, which guarantees the existence  of the dynamics.

The trajectories for the processes $Z_{s}$ will undergo an essentially exponential divergence to infinity after wandering near the origin for a finite time period.  The state of the original process $\mathbf{x}_{t}$ at the final time $T$ is recovered by the limit
\begin{align}\label{LimitEX}
\mathbf{x}_{T}=\lim_{s\rightarrow \infty}\frac{\sqrt{T}Z_{s}}{e^{\frac{s}{2}}}=Z_{0}+\int_{0 }^{\infty}e^{-\frac{s}{2}}\Big(\frac{n-1}{2Z_{s}}ds+\sqrt{R(Z_{s})}d\mathbf{B}_{s}   \Big).
\end{align}

\section{Analysis of the generators for the  stationary dynamics}\label{SecGenerators}

Let $B(\R^{+},[r_{1},r_{2}]    )$ denote the collection of Borel measurable functions from $\R^{+}$ to $[r_{1},r_{2}]$.  As mentioned in the last section, for a given element $R\in B(\R^+,[r_{1},r_{2}])$, the backwards generator for the stationary dynamics has the form $\frac{1}{2}\mathcal{L}^{(R)}$ for $\mathcal{L}^{(R)}:=x\frac{d}{dx}+R(x)\Delta_{n}$, where $\Delta_{n}$ is the radial Laplacian, $\Delta_{n}:=\frac{n-1}{x}\frac{d}{dx}+\frac{d^2}{dx^2}$.  The next lemma states that the operator $\mathcal{L}^{(R)}$ is self-adjoint when acting on the weighted $L^{2}$-space defined below.  Let  $L^{2}\big(\R^{+},w(x)dx\big)$ be the Hilbert space with inner product
$$ \langle f | g\rangle_{R}:=\int_{\R^{+}}dx w(x)\overline{f(x)}g(x)\quad \text{for weight}\quad w(x):=\frac{x^{n-1}e^{\int_{0}^{x}dv\frac{v}{R(v)}} }{ R(x)  }. $$
The corresponding norm is denoted by $\|f\|_{2,R}:=\sqrt{\langle f | f\rangle_{R}}$.

\begin{proposition}\label{LemSelfAdj}
 Let $R\in B(\R^{+},[r_{1},r_{2}] )$.   The operator $\mathcal{L}^{(R)}$ is self-adjoint when assigned the domain  $$\mathbf{D}=\left\{f\in L^{2}\big(\R^{+},w(x)dx\big)\, \bigg|\, \big\|\Delta_{n}f\big\|_{2,R}  <\infty \quad \text{and}\quad  \lim_{x\searrow 0}x^{n-1}\frac{df}{dx}(x)=0          \right\} . $$
Moreover,  $(\mathcal{L}^{(R)},\mathbf{D})$ and  $( \Delta_{n},\mathbf{D})$ are mutually relatively bounded.  \vspace{.3cm}

\end{proposition}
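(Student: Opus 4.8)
The plan is to recognize $\mathcal{L}^{(R)} = x\frac{d}{dx} + R(x)\Delta_n$ as (a multiple of) a Sturm–Liouville operator in disguise and exploit the fact that a symmetric Sturm–Liouville operator on $\mathbb{R}^+$ is essentially self-adjoint when the endpoints $0$ and $\infty$ are in the limit-point case. First I would compute the formal symmetrization: with the weight $w(x) = x^{n-1} e^{\int_0^x v/R(v)\,dv}/R(x)$ one checks directly that $R(x)\mathcal{L}^{(R)} f = \frac{R(x)}{w(x)}\frac{d}{dx}\big(p(x)\frac{df}{dx}\big)$ for the natural choice $p(x) = x^{n-1}e^{\int_0^x v/R(v)\,dv}$; equivalently $\langle \mathcal{L}^{(R)} f \mid g\rangle_R = \langle f \mid \mathcal{L}^{(R)} g\rangle_R$ for $f,g$ smooth and compactly supported away from $0$, after integration by parts, the boundary term at $0$ vanishing precisely because of the condition $\lim_{x\searrow 0} x^{n-1} f'(x) = 0$ built into $\mathbf D$ (note $p(x)\sim x^{n-1}$ as $x\searrow 0$) and the boundary term at $\infty$ being controlled by the exponential weight. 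So symmetry on $\mathbf D$ is the first, essentially bookkeeping, step.

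Next I would establish the relative boundedness claim, which is the quantitative heart of the matter and also what upgrades symmetry to self-adjointness. Since $r_1 \le R(x) \le r_2$, the operator $\mathcal{L}^{(R)} - R(x)\Delta_n = x\frac{d}{dx}$ is a first-order term, so it suffices to show $\|x f'\|_{2,R} \le a\|\Delta_n f\|_{2,R} + b\|f\|_{2,R}$ with $a < 1$ (in fact any finite $a$ plus an interpolation/rescaling trick gives $a$ arbitrarily small), and symmetrically that $\|R\,\Delta_n f\|_{2,R}$ and $\|x f'\|_{2,R}$ each control the other modulo lower order. The cleanest route is to integrate by parts the cross term: $\langle x f' \mid \Delta_n f\rangle_R$ and $\langle x f' \mid x f'\rangle_R$ can both be related to $\|f\|_{2,R}^2$, $\|\Delta_n f\|_{2,R}^2$ and boundary terms using the explicit form of $w$; the drift-type identity $\frac{d}{dx}\log w(x) = \frac{n-1}{x} + \frac{x}{R(x)}$ is what makes these integrations by parts close. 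This yields a Kato–Rellich-type inequality showing that $x\frac{d}{dx}$ is $\Delta_n$-bounded with relative bound that can be taken $<1$, hence $(\mathcal{L}^{(R)},\mathbf D)$ is self-adjoint iff $(R\Delta_n,\mathbf D)$ is, and the two are mutually relatively bounded as asserted.

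Finally I would pin down self-adjointness of $(\Delta_n,\mathbf D)$ — equivalently, that $\mathbf D$ is exactly the maximal domain on which $\Delta_n f \in L^2(w\,dx)$ subject to the limit-point boundary condition at $0$. This is the place to invoke classical Sturm–Liouville / Weyl limit-point–limit-circle theory: at $\infty$ the growing drift $x/R(x)$ makes $w$ grow like $e^{x^2/(2r_2)}$ at least, forcing the limit-point case there (no boundary condition needed, any non-$L^2$ solution is genuinely non-$L^2$); at $0$, for $n \ge 2$ the endpoint is automatically limit point, while for $n \in (0,2)$ it is limit circle and the condition $\lim_{x\searrow 0} x^{n-1} f'(x) = 0$ selects the self-adjoint extension corresponding to the "regular" (non-singular) solution. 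One then checks that $\mathbf D$ as defined is precisely the domain of the corresponding self-adjoint realization, e.g. via the basic criterion that $\operatorname{ran}(\mathcal{L}^{(R)} \pm i)$ is dense, using the explicit ODE solutions (built from the $Y^\pm_{\nu,\eta}$ functions, or directly from modified Bessel functions when $R$ is locally constant) to solve $(\mathcal{L}^{(R)} \pm i)u = g$.

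The main obstacle I expect is the low-regularity of $R$: since $R$ is only Borel measurable, $\mathcal{L}^{(R)}$ is not a classical differential operator and the Sturm–Liouville theory must be applied in its absolutely-continuous-coefficient (Carathéodory) formulation — one works with $p f'$ absolutely continuous rather than $f$ twice differentiable, and the weight $w$, though merely measurable and only locally bounded above and below away from $0$ and $\infty$, still admits the integrations by parts above because $\int_0^x v/R(v)\,dv$ is Lipschitz on compacts. Handling the endpoint behavior at $0$ uniformly in $R$ (where $w(x) \asymp x^{n-1}$ regardless of $R$, since $\int_0^x v/R(v)\,dv \to 0$) is delicate but tractable; it is the interplay between the measurable coefficient and the singular endpoint, rather than any single estimate, that requires care.
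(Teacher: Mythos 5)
The central problem is your reduction of self-adjointness to a piece of the splitting $\mathcal{L}^{(R)}=R(x)\Delta_n+x\frac{d}{dx}$. Kato--Rellich transfers self-adjointness only when the unperturbed operator is self-adjoint and the perturbation is symmetric, and in $L^{2}\big(\mathbb{R}^{+},w(x)dx\big)$ neither summand is symmetric: writing $p(x)=R(x)w(x)=x^{n-1}e^{\int_0^x v/R(v)\,dv}$ and integrating by parts, $\langle g\,|\,R\Delta_n f\rangle_R=-\int p\,\overline{g'}f'-\int p\,\tfrac{x}{R}\,\overline{g}f'+\text{boundary terms}$, and the second (non-symmetric) term is cancelled exactly by $\langle g\,|\,xf'\rangle_R=\int p\,\tfrac{x}{R}\,\overline{g}f'$; the weight $w$ is tailored so that only the \emph{sum} is symmetric. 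Consequently $(\Delta_n,\mathbf{D})$ is not self-adjoint in $L^{2}(w\,dx)$ for nonconstant measurable $R$ (it is not even symmetric there), so your claims that ``$(\mathcal{L}^{(R)},\mathbf{D})$ is self-adjoint iff $(R\Delta_n,\mathbf{D})$ is'' and that step three should ``pin down self-adjointness of $(\Delta_n,\mathbf{D})$'' target the wrong operator; mutual relative boundedness by itself never transfers self-adjointness. Two smaller slips: the log-derivative identity you invoke holds for $p$, not for $w$ (the factor $1/R$ in $w$ is merely measurable, so $w$ is not differentiable), and at $x=0$ the radial Laplacian is limit point only for $n\geq 4$, not $n\geq 2$ (for $2\leq n<4$ both solutions $1$ and $x^{2-n}$ lie in $L^{2}(x^{n-1}dx)$ near $0$) --- harmless here since the paper works with $n\in(0,2)$.

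The viable part of your plan is exactly what your first paragraph sets up: treat the \emph{whole} operator in Sturm--Liouville form, $\mathcal{L}^{(R)}f=\frac{1}{w}\,(p f')'$, and run Weyl limit-point/limit-circle and deficiency-index theory on $\mathcal{L}^{(R)}$ itself: limit point at infinity from the Gaussian-type growth of $w$, limit circle at $0$ for $n\in(0,2)$ with the Neumann-type condition $\lim_{x\searrow 0}p(x)f'(x)=\lim_{x\searrow 0}x^{n-1}f'(x)=0$ selecting the extension. You would still need your relative-boundedness estimates (in the spirit of the paper's identity $\|\mathcal{L}^{(R)}f\|_{2,R}^{2}=\|R\Delta_n f\|_{2,R}^{2}-\int p\,|f'|^{2}$) to identify the resulting maximal domain-with-boundary-condition with the stated $\mathbf{D}$, since $\mathbf{D}$ is defined through $\Delta_n f\in L^{2}(w\,dx)$ rather than $\mathcal{L}^{(R)}f\in L^{2}(w\,dx)$. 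If repaired in this way your argument would be a genuinely different route from the paper's, which avoids Weyl/ODE theory entirely: it verifies symmetry by the same integration by parts, derives the two relative bounds from the identity above together with a small-$x$/large-$x$ splitting and the estimate $\|xf'\|_{2,R}\leq 2r_{2}\|\Delta_n f\|_{2,R}$, and then concludes by showing $\mathbf{D}$ is closed in the graph norm of $\Delta_n$ (the closure-property lemma), so the symmetric operator admits no nontrivial closed extension.
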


Before going to the proof of Prop.~\ref{LemSelfAdj}, I will prove the following simple lemma. 
\begin{lemma}[Closure Property]\label{Nill}
The space $ \mathbf{D}$ is closed with respect to the graph  norm $\|  g\|_{\Delta_{n}}:=\|g\|_{2,R}+\|\Delta_{n} g\|_{2,R}$.
\end{lemma}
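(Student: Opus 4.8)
The plan is to show that if a sequence $(g_k)$ in $\mathbf{D}$ is Cauchy in the graph norm $\|\cdot\|_{\Delta_n}$, then its limit lies in $\mathbf{D}$. Since $L^2(\R^+, w(x)dx)$ is complete, $g_k \to g$ in $\|\cdot\|_{2,R}$ and $\Delta_n g_k \to h$ in $\|\cdot\|_{2,R}$ for some $g,h$ in the Hilbert space; the content of the lemma is that $h = \Delta_n g$ (so that $\|\Delta_n g\|_{2,R} < \infty$) and that the boundary condition $\lim_{x\searrow 0} x^{n-1} g'(x) = 0$ is preserved. The first point is essentially a statement that $\Delta_n$, viewed as an unbounded operator on this weighted $L^2$-space with domain $\mathbf{D}$, is a closed operator; the second is where the boundary behaviour must be tracked carefully.

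First I would establish that $\Delta_n$ is closable on $C_c^\infty$-type test functions and identify $\Delta_n g = h$ distributionally. Since $w(x)$ is locally bounded above and bounded below away from $0$ on compact subsets of $(0,\infty)$, convergence in $\|\cdot\|_{2,R}$ implies convergence in $L^2_{loc}(0,\infty)$ (Lebesgue measure); hence $g_k \to g$ and $\Delta_n g_k \to h$ in $\mathcal{D}'(0,\infty)$, and because $\Delta_n$ is continuous on distributions, $\Delta_n g = h$ on $(0,\infty)$. This already gives $\|\Delta_n g\|_{2,R} = \|h\|_{2,R} < \infty$. The remaining work is purely at the origin.

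For the boundary condition, the key step is an integral representation: for $0 < x < 1$ write
\begin{align*}
x^{n-1} g_k'(x) = \int_0^x v^{n-1}\Big(\Delta_n g_k\Big)(v)\, dv,
\end{align*}
which holds for $g_k \in \mathbf{D}$ precisely because $\lim_{v\searrow 0} v^{n-1} g_k'(v) = 0$. I would then use the Cauchy–Schwarz inequality in the weighted space to bound $\int_0^x v^{n-1}|(\Delta_n g_k)(v)|\,dv$ by $\big(\int_0^x \frac{v^{n-1}}{w(v)}dv\big)^{1/2}\|\Delta_n g_k\|_{2,R}$ — here $\frac{v^{n-1}}{w(v)} = R(v) e^{-\int_0^v u/R(u)\,du}$ is bounded (by $r_2$), so $\int_0^x v^{n-1}/w(v)\,dv = O(x)$ as $x \searrow 0$, uniformly in $k$. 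Passing $k\to\infty$ (using $\Delta_n g_k \to h$ in $\|\cdot\|_{2,R}$ and the $L^2_{loc}$ convergence to upgrade the pointwise identity to $g$), I get $x^{n-1} g'(x) = \int_0^x v^{n-1} h(v)\,dv$ with the right-hand side $\to 0$ as $x \searrow 0$ by the same Cauchy–Schwarz estimate applied to $h$. This yields $g \in \mathbf{D}$.

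The main obstacle I anticipate is the justification that the identity $x^{n-1} g_k'(x) = \int_0^x v^{n-1}(\Delta_n g_k)(v)\,dv$ actually passes to the limit in a way that produces a genuine (absolutely continuous) derivative $g'$ for the limit $g$, rather than just a distributional one — i.e., controlling $g_k'$ itself, not only $x^{n-1}g_k'$, on neighbourhoods of $0$. This is handled by noting that from the integral representation $g_k'(x) = x^{1-n}\int_0^x v^{n-1}(\Delta_n g_k)(v)\,dv$ the family $g_k'$ is Cauchy in $L^1_{loc}(0,\infty)$ (indeed locally uniformly Cauchy), so $g_k \to g$ in $W^{1,1}_{loc}$, $g$ is absolutely continuous on $(0,\infty)$, and the limiting identity for $x^{n-1}g'(x)$ is legitimate. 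Everything else — completeness of the weighted $L^2$-space, the local comparison of the weighted and Lebesgue measures — is routine given the explicit form of $w$.
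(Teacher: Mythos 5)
Your proposal is correct, and it reaches the conclusion by a genuinely different route than the paper. The paper first changes variables, $\widehat{f}(z):=f\big(z^{\frac{1}{2-n}}\big)$, so that $\Delta_{n}$ becomes a weighted second derivative in $z$; it then identifies the limit of $\Delta_{n}f_{j}$ through a double-integral representation anchored at infinity, $h(z)=\int_{z}^{\infty}da\,(a-z)\frac{a^{2(n-1)/(2-n)}}{(2-n)^{2}}\widehat{g}(a)$, and obtains the boundary condition from $\frac{d\widehat{h}_{j}}{dz}(z)=-\int_{z}^{\infty}da\,\frac{d^{2}\widehat{h}_{j}}{dz^{2}}(a)$, in both cases using Cauchy--Schwarz with the Gaussian-type decay of $1/w$ at infinity to make the tail integrals finite. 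You instead stay in the original variable, identify $\Delta_{n}g=h$ distributionally via the local equivalence of $w(x)dx$ and Lebesgue measure on compacts of $(0,\infty)$, and anchor the boundary-condition estimate at the origin through $x^{n-1}g_{k}'(x)=\int_{0}^{x}dv\,v^{n-1}(\Delta_{n}g_{k})(v)$, exploiting $\frac{v^{n-1}}{w(v)}=R(v)e^{-\int_{0}^{v}du\,u/R(u)}\leq r_{2}$; your treatment of the passage to the limit (local uniform Cauchyness of $g_{k}'$, hence $W^{1,1}_{\mathrm{loc}}$ convergence and an absolutely continuous representative of $g'$) is exactly the point the paper finesses with its explicit representations. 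Your route is arguably more elementary and slightly more general, since it never uses the $n<2$-specific change of variables $z=x^{2-n}$; the paper's route buys explicit closed-form representations of $\widehat f$ and its derivative that it reuses in spirit elsewhere. One bookkeeping slip: in your Cauchy--Schwarz step the correct factor is $\big(\int_{0}^{x}dv\,\frac{v^{2(n-1)}}{w(v)}\big)^{1/2}\leq\big(r_{2}\int_{0}^{x}dv\,v^{n-1}\big)^{1/2}=O(x^{n/2})$ rather than $\big(\int_{0}^{x}dv\,\frac{v^{n-1}}{w(v)}\big)^{1/2}=O(x^{1/2})$ (your splitting as written misplaces a factor of $v^{(n-1)/2}$, which matters when $n<1$); the corrected bound still vanishes as $x\searrow 0$ and is uniform in $k$, so the argument goes through unchanged.
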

\begin{proof}
  Let $f_{j}$ be a Cauchy sequence with respect to the norm $\|  \cdot \|_{\Delta_{n}}$.   There are $f,g\in L^{2}\big(\mathbb{R}^{+},w(x)dx\big)$ such that  
\begin{align}\label{Mig}
 \|f_{j}-f   \|_{2,R}\longrightarrow 0 \hspace{1cm}\text{and}\hspace{1cm} \| \Delta_{n}f_{j}-g   \|_{2,R}\longrightarrow 0      .
 \end{align}
To show $f\in \mathbf{D}$, I need to verify that $g=\Delta_{n}f$ and $\lim_{x\searrow 0}x^{n-1}\frac{df}{dx}(x)=0$.

 It will be useful to use a spatial transformation.  Define $\widehat{h}(z):=h\big( z^{\frac{1}{2-n} } \big)$ for arbitrary $h:\R^+\rightarrow \C$.  Notice that  $(\Delta_{n}h)(x) =(2-n)^2 z^{\frac{2(1-n) }{2-n  } }\frac{d^2\widehat{h}}{dz^2}(z)   $ for $z=x^{2-n}$.   The equality $g=\Delta_{n}f$ is equivalent to $\widehat{g}=(2-n)^2 z^{\frac{2(1-n) }{2-n  } }\frac{d^2\widehat{f}}{dz^2} $ and, by calculus,  this is equivalent to $\widehat{f}=h$ for 
\begin{align*}
h(z):= \int_{z}^{\infty}da (a-z) \frac{a^{\frac{2(n-1) }{2-n  } }}{(2-n)^2}\widehat{g}(a).
\end{align*}
To see that $\widehat{f}=h$ indeed  holds, notice that
\begin{align*}
|\widehat{f}(z)-h(z)|  \leq &      \big|\widehat{f}(z)-\widehat{f}_{j}(z)\big| +   \big|\widehat{f}_{j}(z)-h(z)\big| \\   \leq & \big|\widehat{f}(z)-\widehat{f}_{j}(z)\big| +\bigg| \int_{z}^{\infty}da (a-z)\bigg( \frac{d^2\widehat{f}_{j}}{dz^2}(a) - \frac{ a^{\frac{2(n-1) }{2-n  } } }{(2-n)^2}\widehat{g}(a) \bigg) \bigg|  \\   \leq &  \big|\widehat{f}(z)-\widehat{f}_{j}(z)\big| + \frac{ \| \Delta_{n} f_{j} -g \|_{2,R}   }{(2-n)^{\frac{3}{2} } }\Bigg(\int_{z}^{\infty}da (a-z)^2 \frac{a^{\frac{3(n-1) }{ 2-n } }   }{w\big(a^{\frac{1}{2-n}} \big)}    \Bigg)^{\frac{1}{2}},
\end{align*}
where the third inequality follows by Cauchy-Schwarz and a change of integration variables.  Moreover,~(\ref{Mig}) implies 
that for  a.e. $z\in \R^+$  there is a subsequential limit  $j_{m}\rightarrow \infty$ such that the right side above converges to zero. 
Thus $g=\Delta_{n}f$.

I can use similar techniques to show that $\lim_{x\searrow 0}x^{n-1}\frac{df}{dx}(x)=0$.  Notice that  $\lim_{x\searrow 0}x^{n-1}\frac{df}{dx}(x)=0$ is equivalent to $\lim_{z\searrow 0}\frac{d\widehat{f}}{dz}(z)=0$.   Define $h_{j}:=f_{j}-f$.  By  calculus, I have
\begin{align*}
\frac{d\widehat{h}_{j}}{dz}(z)=-\int_{z}^{\infty}da \frac{d^2\widehat{h}_{j}}{dz^2}(a).
\end{align*}
Cauchy-Schwarz and changes of integration variables yield
\begin{align}
\Big|\frac{d\widehat{h}_{j}}{dz}(z)\Big|\leq  \sqrt{C} \|     h_{j}   \| _{\Delta_{n}} \hspace{1cm}\text{for}\hspace{1cm}C:=  \int_{0}^{\infty}dx x^{\frac{2(n-1)}{2-n}   }R(x)  e^{-\int_{0}^{x}da\frac{a}{R(a)} }  .   
\end{align}
It follows that $\frac{d\widehat{h}_{j}}{dz}=\frac{d\widehat{f}_{j}}{dz}-\frac{d\widehat{f}}{dz}$ converges to zero uniformly as $j\rightarrow \infty$, and I have  $\lim_{x\searrow 0}\frac{d\widehat{f} }{dx}(x)=0$.\vspace{.3cm}

\end{proof}

\begin{proof}[Proof of Proposition~\ref{LemSelfAdj}]
In the analysis below, I will prove the following technical points:
\begin{enumerate}
\item[(i).]  $\mathcal{L}^{(R)}$ sends elements in $\mathbf{D}$ to $L^{2}\big(\mathbb{R}^{+},w(x)dx\big)$, i.e.,  $\mathcal{L}^{(R)}$ is well-defined on the space $\mathbf{D}$.

\item[(ii).]  For all $f \in \mathbf{D}$,   there is a $C>0$ such that 
$$  \big\|\mathcal{L}^{(R)}f\big\|_{2,R}   \leq  C\big( \|  f \|_{2,R}   +\big\|    \Delta_{n} f\big\|_{2,R}\big).  $$

\item[(iii).]  For all $f \in \mathbf{D}$,  there is a $C>0$ such that 
$$  \big\|    \Delta_{n} f\big\|_{2,R}  \leq  C\big( \|  f \|_{2,R}   +  \big\|\mathcal{L}^{(R)}f\big\|_{2,R} \big).  $$

\end{enumerate}
Before proving the above statements, I will use them to deduce that  $(\mathcal{L}^{(R)},\mathbf{D})$ is self-adjoint.  It is sufficient to show that $(\mathcal{L}^{(R)},\mathbf{D})$ is symmetric and has no nontrivial  extension (since the adjoint of a symmetric operator is a closed extension).   Two applications of integration by parts shows that $(\mathcal{L}^{(R)},\mathbf{D})$ is a symmetric operator since for all $f,g\in \mathbf{D} $
\begin{align*}
 \big\langle g \big| \mathcal{L}^{(R)}f    \big\rangle_{R}=  \int_{\R^{+}}dx x^{n-1}e^{\int_{0}^{x}dv\frac{v}{R(v)}}\overline{\frac{dg}{dx}(x)}\frac{df}{dx}(x)=  \big\langle \mathcal{L}^{(R)} g \big| f    \big\rangle_{R} ,
\end{align*}
where the boundary terms vanish by the condition $\lim_{x\searrow 0}x^{n-1}\frac{d\phi}{dx}(x)=0$ for $\phi=f,g$.  Suppose that there are $f_{j}\in \mathbf{D}$ and  $f,g\in L^{2}\big(\R^{+},w(x)dx\big)$ such that
$$   \| f_{j}-f\|_{2,R}\longrightarrow 0  \hspace{1cm}\text{and}\hspace{1cm}\big\| \mathcal{L}^{(R)}f_{j}-g\big\|_{2,R} \longrightarrow 0 \hspace{1cm}\text{as}\hspace{1cm}j\longrightarrow 0 .    $$
Since $f_{j}$ and $\mathcal{L}^{(R)}f_{j}$  are Cauchy in $L^{2}\big(\R^{+},w(x)dx\big)$, statement (iii) implies that $\Delta_{n}f_{j}$ is also Cauchy.  By Lem.~\ref{Nill}, it follows that $f$ is in $D$.  Thus, $(\mathcal{L}^{(R)},\mathbf{D})$ has no nontrivial extension and  must be self-adjoint. \vspace{.2cm}

 To complete the proof, I  will now prove statements (i)-(iii).  \vspace{.4cm}

\noindent (i) and (ii).   Using integration  by parts, I have the equality below for all smooth functions   $f\in L^{2}\big(\R^{+},w(x)dx\big)$ with $\Delta_{n}f\in L^{2}\big(\R^{+},w(x)dx\big)$:
\begin{align}\label{IntPart}
 \big\|\mathcal{L}^{(R)}f\big\|_{2,R}^{2}= \big\|R(x)\Delta_{n} f  \big\|_{2,R}^{2}- \int_{\R^{+}}dx x^{n-1}e^{\int_{0}^{x}dv\frac{v}{R(v)}}  \Big|\frac{d f}{dx}(x)\Big|^{2} .
 \end{align}
  The equality~(\ref{IntPart})  extends to all elements in $\mathbf{D}$ and implies that $ \|\mathcal{L}^{(R)}f\|_{2,R}\leq r_{2}\|\Delta_{n} f\|_{2,R}   $ since $R(x)\leq r_{2}$.  Hence,   $\mathcal{L}^{(R)}$ maps $\mathbf{D}$ into $L^{2}\big(\R^{+},w(x)dx\big)$, and $(\mathcal{L}^{(R)},\mathbf{D})$ is relatively bounded to $(\Delta_{n},\mathbf{D})$.

\vspace{.4cm}


\noindent (iii).  Next I  focus on showing that $\Delta_{n}$ is also relatively bounded to $\mathcal{L}^{(R)}$.  Combining~(\ref{IntPart}) with $R(x)\geq r_{1}$ implies that
\begin{align}
 \big\|\mathcal{L}^{(R)}f\big\|_{2,R}^{2}\geq  r_{1}^2 \big\| \Delta_{n}  f \big\|_{2,R}^{2}- \int_{\R^{+}}dx x^{n-1}e^{\int_{0}^{x}dv\frac{v}{R(v)}}  \Big|\frac{df}{dx}(x)\Big|^{2}. \label{IntPart2}
\end{align}

 With the lower bound~(\ref{IntPart2}), it will be enough to demonstrate that there is a $C>0$ such that
\begin{align}\label{Onion}
\int_{\R^{+}}dx x^{n-1}e^{\int_{0}^{x}dv\frac{v}{R(v)}}  \Big|\frac{df}{dx}(x)\Big|^{2}\leq C\|f\|_{2,R}^{2} + \frac{r_{1}^2}{2}\big\| \Delta_{n} f  \big\|_{2,R}^{2} . 
\end{align}
  It is convenient to split the integration over $\R^{+} $ into the domains $x\leq L$ and $x> L$ for some $L\gg 1$ to get the bound 
\begin{align}\label{Jipper}
\int_{\R^{+}}dx x^{n-1} e^{\int_{0}^{x}dv\frac{v}{R(v)}}  \Big|\frac{df}{dx}(x)\Big|^{2} \leq e^{\frac{L^{2}}{2r_{1}}} \int_{\R^{+}}dxx^{n-1}\Big|\frac{df}{dx}(x)\Big|^{2}+\int_{x\geq L}dx x^{n-1} e^{\int_{0}^{x}dv\frac{v}{R(v)}}   \Big|\frac{df}{dx}(x)\Big|^{2}.  
\end{align}

For the first term on the right side of~(\ref{Jipper}), using integration by parts, Cauchy-Schwarz, and  the inequality $2uv\leq u^{2}+v^{2}$ yields the first inequality below for any $c>0$: 
\begin{align}\label{Ghandi}
 \int_{\R^+}dx x^{n-1}\Big|\frac{df}{dx}(x)\Big|^{2}\leq & c \int_{\R^+}dx x^{n-1}\big| f(x)\big|^{2}+\frac{1}{c}\int_{\R^+}dx x^{n-1}\big|(\Delta_{n}f)(x)\big|^{2}\nonumber \\ \leq &  cr_{2}\|f\|_{2,R}^{2}+\frac{r_{2}}{c}\big\|\Delta_{n} f  \big\|_{2,R}^{2}.
\end{align}
The second inequality of~(\ref{Ghandi}) follows from the relation $w(x)\geq r_{2}^{-1}$.   For the second term on the right side of~(\ref{Jipper}), I have the inequalities
 \begin{align}\label{SkyLim}
\int_{x\geq L}dx x^{n-1} e^{\int_{0}^{x}dv\frac{v}{R(v)}}   \Big|\frac{df}{dx}(x)\Big|^{2} &\leq  \frac{r_{2}}{L^{2}}\Big\|   x\frac{d}{d x} f \Big\|_{2,R}^{2} \leq \frac{4r_{2}^{3} }{L^{2}}\big\| \Delta_{n}f \big\|_{2,R}^{2}.
\end{align}
The first inequality in~(\ref{SkyLim}) is Chebyshev's, and the second inequality is discussed below.  By  writing $\mathcal{L}^{(R)}f=x\frac{d}{d x}f+R(x)\Delta_{n}f   $ and expanding the left side of~(\ref{IntPart}), I obtain the following inequality:
\begin{align*}
 \Big\|   x\frac{d}{d x} f \Big\|_{2,R}^{2}\leq &-2\textup{Re}\Big(\Big\langle x\frac{d}{d x} f \Big| R(x)\Delta_{n} f \Big\rangle_{2,R} \Big)\nonumber   \\ \leq &   2r_{2}\Big\|   x\frac{d}{d x} f \Big\|_{2,R} \big\|  \Delta_{n}  f \big\|_{2,R}.
 \end{align*}
 The second inequality is by Cauchy-Schwarz and $R(x)\leq r_{2}$.  Thus $\big\|   x\frac{d}{d x} f \big\|_{2,R}$ is smaller than $2r_{2} \big\| \Delta_{n}  f \big\|_{2,R} $ as required to get the second inequality of~(\ref{SkyLim}).  
 
By picking $L\in \R^{+}$ with $ L^{2}\geq \frac{16r_{2}^{3} }{r_{1}^2 }$ and $c\in \R^{+}$ with $c\geq e^{\frac{L^{2}}{2r_{1}}}\frac{4r_{2}}{r_{1}^{2} }$, I obtain the inequality~(\ref{Onion}) for $C=cr_{2} e^{\frac{L^{2}}{2r_{1}}}  $.

\end{proof}

In the statement of the proposition below, I denote the maximum element in the spectrum of $\mathcal{L}^{(R)}$ by $\overline{\Sigma}\big(\mathcal{L}^{(R)}\big)$.  For $f:\R^{+}\rightarrow \R$, I refer to a point where $\Delta_{n}f$ changes signs as a \textit{radial inflection point}.

\begin{proposition} Let  $R\in B(\R^{+},[r_{1},r_{2}])$  and $f\in \mathbf{D} $.   \label{PropBasics}

\begin{enumerate}

\item  The operator $\mathcal{L}^{(R)}$ has compact resolvent.  

\item The eigenvalues for $\mathcal{L}^{(R)}$ are strictly negative.

\item The principle eigenvalue $\overline{\Sigma}\big(\mathcal{L}^{(R)}\big)$ is non-degenerate, and the phase of the corresponding  eigenfunction can be chosen so that  the following properties  hold: 
\begin{itemize}
\item The values $\phi(x)$ are strictly positive for all $x\in \R^{+}$.  

\item $\Delta_{n}\phi  \in L^{2}\big(\R^{+},w(x)dx\big)$ and $R(x)\big(\Delta_{n}\phi \big)(x)$ is continuous.     

\item The function $\phi$ is strictly decreasing.

\item The function $\phi$ has a unique radial inflection point $\mathbf{c}>0$ at which  $\Delta_{n}\phi$ is continuous (and thus $(\Delta_{n}\phi)(\frak{c})=0$).

\end{itemize}

\item   The following equality holds for any $b\in \R^{+}$:
 $$  \lim_{s\rightarrow \infty}\frac{2\log\Big( \int_{\R^{+}}dx\, \psi_{b,s }^{(R)}(x) f(x) \Big)}{s}= \overline{\Sigma}\big(\mathcal{L}^{(R)}\big)  $$

\end{enumerate}

\end{proposition}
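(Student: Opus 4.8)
\textit{My plan} is to establish the four assertions using throughout the divergence form $w(x)\,\mathcal{L}^{(R)}f(x)=\big(x^{n-1}e^{\Phi(x)}f'(x)\big)'$, where $\Phi(x):=\int_{0}^{x}v/R(v)\,dv$ and $x^{n-1}e^{\Phi}=w(x)R(x)$ is continuous and strictly positive on $\mathbb{R}^{+}$ even though $R$ may be merely measurable. One integration by parts --- the boundary term at $0$ vanishing by $\lim_{x\searrow0}x^{n-1}f'(x)=0$, the one at infinity being handled by the same closure argument as in the proof of Proposition~\ref{LemSelfAdj} (note $w(x)\ge r_{2}^{-1}x^{n-1}e^{x^{2}/(2r_{2})}$ forces elements of $\mathbf{D}$ to decay) --- gives for $f\in\mathbf{D}$
\[
\big\langle f\,\big|\,\mathcal{L}^{(R)}f\big\rangle_{R}=-\int_{\mathbb{R}^{+}}dx\;x^{n-1}e^{\Phi(x)}\,|f'(x)|^{2}=:-Q(f)\le0 ,
\]
so $\mathcal{L}^{(R)}\le0$; write $\lambda_{0}:=\overline{\Sigma}\big(\mathcal{L}^{(R)}\big)$. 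Item 2 is then immediate: every eigenvalue is $\le0$, and $\mathcal{L}^{(R)}\phi=0$ forces $Q(\phi)=0$, hence $\phi$ constant, hence $\phi\equiv0$ because $\int_{\mathbb{R}^{+}}w\,dx=\infty$; so $0$ is not an eigenvalue. Item 1 is the real work; items 3 and 4 then follow by fairly standard Sturm--Liouville and spectral-theoretic arguments.

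\textit{Item 1 --- the principal obstacle.} I will show that $\mathbf{D}$ with its graph norm embeds compactly into $\mathcal{H}:=L^{2}(\mathbb{R}^{+},w\,dx)$. Interior compactness is routine: $Q(f)\le\|f\|_{2,R}\|\mathcal{L}^{(R)}f\|_{2,R}$ dominates $\int_{\delta}^{L}|f'|^{2}$ on every compact $[\delta,L]\subset(0,\infty)$, so a graph-bounded sequence is bounded in $H^{1}([\delta,L])$ and, via Rellich and a diagonal extraction, converges along a subsequence in $L^{2}$ on every such interval. The substance is uniform smallness of the $\mathcal{H}$-mass near $0$ and near $\infty$, which I obtain from two weighted Hardy (Muckenhoupt) inequalities. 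Near infinity, $f\to0$ gives $f(x)=-\int_{x}^{\infty}f'$, hence $\int_{L}^{\infty}w|f|^{2}\le 4B_{L}\,Q(f)$ with $B_{L}=\sup_{x>L}\big(\int_{L}^{x}w\big)\big(\int_{x}^{\infty}(wR)^{-1}\big)$; using only the increment bound $\Phi(u)-\Phi(x)\ge(u^{2}-x^{2})/(2r_{2})$, an integration by parts, and $R\le r_{2}$, one gets $\int_{L}^{x}w\lesssim L^{n-2}e^{\Phi(x)}$ and $\int_{x}^{\infty}(wR)^{-1}\lesssim x^{-n}e^{-\Phi(x)}$, so $B_{L}=O(L^{-2})\to0$. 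Near the origin (where $n<2$ gives each $f\in\mathbf{D}$ a finite trace $f(0^{+})$, since $\int_{0}x^{1-n}dx<\infty$), one combines $\int_{0}^{\delta}w\lesssim\delta^{n}$, the Hardy bound $\int_{0}^{\delta}w\,|f-f(0^{+})|^{2}\lesssim\delta^{2}Q(f)$, and a local Sobolev bound $|f(0^{+})|^{2}\lesssim\|f\|_{2,R}^{2}+Q(f)$ to obtain $\int_{0}^{\delta}w|f|^{2}\le\varepsilon(\delta)\big(\|f\|_{2,R}^{2}+\|\mathcal{L}^{(R)}f\|_{2,R}^{2}\big)$ with $\varepsilon(\delta)\to0$. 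Interior compactness plus this uniform endpoint control makes any graph-bounded sequence Cauchy in $\mathcal{H}$; equivalently, by Persson's theorem, $\sigma_{\mathrm{ess}}(-\mathcal{L}^{(R)})=\varnothing$. Hence $\mathcal{L}^{(R)}$ has compact resolvent, its spectrum is a sequence of finite-multiplicity eigenvalues decreasing to $-\infty$, and $\lambda_{0}$ is attained. Producing these two endpoint estimates with Muckenhoupt constants that vanish --- the place where $n<2$ is genuinely used --- is the one nontrivial ingredient of the whole proof.

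\textit{Item 3.} Order the eigenvalues of $-\mathcal{L}^{(R)}$ as $0<\mu_{0}\le\mu_{1}\le\cdots$, with $\mu_{0}=-\lambda_{0}=\inf\{Q(f)/\|f\|_{2,R}^{2}\}$. Since $Q(|f|)\le Q(f)$, a minimiser $\phi$ can be taken $\ge0$; it solves the Sturm--Liouville equation $(wR\,\phi')'=\lambda_{0}w\phi$, so $wR\phi'=x^{n-1}e^{\Phi}\phi'$ is continuous, whence $\phi\in C^{1}(\mathbb{R}^{+})$, and the strong maximum principle (ODE uniqueness) upgrades $\phi\ge0$ to $\phi>0$ throughout. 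If $\widetilde{\phi}$ is another principal eigenfunction, the Wronskian $W=wR(\phi\widetilde{\phi}'-\phi'\widetilde{\phi})$ is constant and $W(0^{+})=0$ (because $wR\phi',wR\widetilde{\phi}'\to0$ and $\phi,\widetilde{\phi}$ are bounded at $0$), forcing $\widetilde{\phi}\propto\phi$: $\lambda_{0}$ is non-degenerate. Integrating $(wR\phi')'=\lambda_{0}w\phi<0$ from $0$ gives $wR\phi'(x)=\lambda_{0}\int_{0}^{x}w\phi<0$, so $\phi$ is strictly decreasing. Finally, for $g:=R\,\Delta_{n}\phi=\lambda_{0}\phi-x\phi'$ --- continuous, with the sign of $\Delta_{n}\phi$ --- substituting $\phi''=g/R-(n-1)\phi'/x$ yields the key identity
\[
\big(e^{\Phi}g\big)'=(\lambda_{0}+n-2)\,e^{\Phi}\,\phi' ,
\]
whose right-hand side is $>0$ since $\lambda_{0}<0$ and $n<2$; hence $e^{\Phi}g$ is strictly increasing and $g$ changes sign at most once. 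Since $g(0^{+})=\lambda_{0}\phi(0^{+})<0$, while $g$ cannot stay negative on all of $\mathbb{R}^{+}$ (else $(x^{n-1}\phi')'=x^{n-1}\Delta_{n}\phi<0$ would make $x^{n-1}\phi'$ strictly decreasing, incompatible with its vanishing both as $x\searrow0$ and as $x\to\infty$), $\Delta_{n}\phi$ changes sign at a unique $\mathbf{c}>0$, at which $\Delta_{n}\phi=g/R$ is continuous with value $0$; $\Delta_{n}\phi\in L^{2}(w\,dx)$ is part of membership in $\mathbf{D}$.

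\textit{Item 4.} Let $\lambda_{0}>\lambda_{1}>\cdots$ (the top one simple by item 3) with $\mathcal{H}$-orthonormal eigenfunctions $\phi_{0}=\phi,\phi_{1},\dots$. The forward equation~(\ref{DiffEq}) identifies $\int_{\mathbb{R}^{+}}\psi_{b,s}^{(R)}(x)f(x)\,dx=\big(e^{s\mathcal{L}^{(R)}/2}f\big)(b)$, which for $s>0$ is smooth in $b$ by item 1, so spectral calculus gives
\[
\int_{\mathbb{R}^{+}}\psi_{b,s}^{(R)}(x)f(x)\,dx=e^{s\lambda_{0}/2}\,\langle\phi_{0}|f\rangle_{R}\,\phi_{0}(b)+\big(e^{s\mathcal{L}^{(R)}/2}P^{\perp}f\big)(b),
\]
$P^{\perp}$ projecting off $\phi_{0}$. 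Writing $e^{s\mathcal{L}^{(R)}/2}=e^{(s-1)\mathcal{L}^{(R)}/2}e^{\mathcal{L}^{(R)}/2}$, the smoothing of $e^{\mathcal{L}^{(R)}/2}$ together with the one-dimensional embedding $H^{1}\hookrightarrow C$ on bounded intervals and the mutual relative boundedness of $\mathcal{L}^{(R)}$ and $\Delta_{n}$ bound the remainder by $C_{b}\,e^{s\lambda_{1}/2}\|f\|_{2,R}$ (since $\mathcal{L}^{(R)}\le\lambda_{1}$ on $\mathrm{ran}\,P^{\perp}$). Hence $\int\psi_{b,s}^{(R)}f=e^{s\lambda_{0}/2}\big(\langle\phi_{0}|f\rangle_{R}\phi_{0}(b)+O(e^{s(\lambda_{1}-\lambda_{0})/2})\big)$, with $\phi_{0}(b)>0$ and $\lambda_{1}<\lambda_{0}$; provided $\langle\phi_{0}|f\rangle_{R}\ne0$ --- which holds in particular whenever $f\ge0$, $f\not\equiv0$, the case used in Theorem~\ref{ThmMain} --- the parenthesis is bounded away from $0$ for large $s$, so $\log\int\psi_{b,s}^{(R)}f=\tfrac{s}{2}\lambda_{0}+O(1)$ and the quotient in the statement tends to $\lambda_{0}=\overline{\Sigma}\big(\mathcal{L}^{(R)}\big)$.
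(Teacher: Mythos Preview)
Your argument is correct and, in two of the four parts, takes a genuinely different route from the paper.

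\textbf{Item 1.} The paper writes down the Green function for $(\mathcal{L}^{(R)})^{-1}$ explicitly from the fundamental solutions $v_-(x)=1$ and $v_+(x)=\int_x^\infty z^{1-n}e^{-\Phi(z)}\,dz$, transfers the kernel to $L^2(\mathbb{R}^+)$ by the isometry $f\mapsto w^{1/2}f$, and checks by inspection that the resulting kernel is Hilbert--Schmidt. You instead prove the compact embedding $\mathbf{D}\hookrightarrow L^2(w\,dx)$ directly via Rellich on compacta plus two Muckenhoupt--Hardy tail estimates. Both work; the Green-function route gives more (a Hilbert--Schmidt resolvent, not merely compact) with less machinery, while yours is transferable to settings where the fundamental solutions are not so explicit. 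One cosmetic slip: your bound $\int_L^x w\lesssim L^{n-2}e^{\Phi(x)}$ should read $x^{n-2}e^{\Phi(x)}$; the product with $\int_x^\infty (wR)^{-1}\lesssim x^{-n}e^{-\Phi(x)}$ still gives $B_L=O(L^{-2})$, so the conclusion stands.

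\textbf{Item 3.} Here your approach is cleaner than the paper's. The paper passes to $\psi(y)=\phi(y^{1/(2-n)})$ and runs a case-by-case contradiction argument on the first point $u>\mathbf{c}$ where either $\psi'$ or $\psi''$ vanishes. Your identity $(e^{\Phi}g)'=(\lambda_0+n-2)e^{\Phi}\phi'$, with $g=R\,\Delta_n\phi=\lambda_0\phi-x\phi'$, dispatches the uniqueness of the radial inflection point in one line once $\phi'<0$ is known. The one place to tighten: the reason $g$ cannot stay negative on all of $\mathbb{R}^+$ is not that $x^{n-1}\phi'$ vanishes at infinity (this is not established), but that a strictly decreasing $x^{n-1}\phi'$ starting from $0$ forces $\phi'(x)\le -c\,x^{1-n}$ beyond some point, whence $\phi(x)\to-\infty$ since $n<2$, contradicting $\phi>0$.

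\textbf{Items 2 and 4.} These are essentially the same as the paper's: nonpositivity from the quadratic form, and spectral decomposition plus a pointwise bound on the remainder. Your explicit caveat that the limit in Item~4 requires $\langle\phi_0\,|\,f\rangle_R\ne0$ is well taken; the paper's proof has the same implicit restriction, and the applications downstream use only nonnegative $f$.
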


\begin{proof}\text{   }\\
\noindent Part (1): Define the functions $v_{\pm}:\R^{+}\rightarrow \R^+$ such that $v_{-}(x):=1$ and 
$ v_{+}(x):= \int_{x}^{\infty}dz z^{1-n} e^{-\int_{0}^{z}dv\frac{ v  }{R(v)} }    $.
Notice that $ g=v_{\pm} $ are the fundamental solutions to the differential equation 
$$ \big(\mathcal{L}^{(R)}g\big)(x)=  x\frac{dg}{d x}(x)+R(x)\big(\Delta_{n}g\big)(x)=  0.   $$
Also, define the functions $c_{\pm}:\R^{+}\rightarrow \R^{+}$ as
 $$c_{+}(x):= \frac{x^{n-1}}{R(x)}e^{\int_{0}^{x}dv\frac{v}{R(v)}}\hspace{1cm}\text{and}\hspace{1cm}c_{-}(x):= \frac{x^{n-1}}{R(x)}e^{\int_{0}^{x}dv\frac{v}{R(v)}}\int_{x}^{\infty}dzz^{1-n}e^{-\int_{0}^{z}dv\frac{v}{R(v)}}.  $$ 
 By the standard technique of pasting together the fundamental solutions, the Green function $G:\R^{+}\times \R^{+}\rightarrow \R$ satisfying $-\big((\mathcal{L}^{(R)})^{-1}f\big)(x)=\int_{\R^{+} }dzG(x,z)f(z)$ can be written in the form
\begin{align}\label{Haifa}
G(x,z)=c_{-}(z)v_{-}(x)\chi(x\leq z)+c_{+}(z)v_{+}(x)\chi(x>z).
\end{align}
There is a canonical isometry from  $L^{2}\big(\R^{+},w(x)dx\big)$ to $L^{2}(\R^{+})$   given by the map sending $f(x)$ to     $w^{-\frac{1}{2}}(x)f(x)$.   Thus  the kernel 
\begin{align}\label{Spike}
\widehat{G}(x,z):=& w^{\frac{1}{2}}(x)G(x,z)w^{-\frac{1}{2}}(z)\nonumber \\ =&\frac{(xz)^{\frac{n-1}{2}}}{\sqrt{R(x)R(z)}    } e^{\frac{1}{2}\int_{0}^{x}dv\frac{ v  }{R(v)}+\frac{1}{2}\int_{0}^{z}dv\frac{ v  }{R(v)} }  v_{+}\big(\max(x,z) \big)   
\end{align}
yields the Hilbert-Schmidt norm though the standard formula
$
\big\|  (\mathcal{L}^{(R)})^{-1} \big\|_{\textup{HS}}^{2}  = \int_{\R^+\times \R^{+} }dxdz\big|\widehat{G}(x,z)\big|^{2}$.  
However, the quantity $ \int_{\R^+\times \R^{+}}dxdz\big|\widehat{G}(x,z)\big|^{2}$  is finite given the form~(\ref{Spike}).  Since Hilbert-Schmidt operators are compact, the operator $\mathcal{L}^{(R)}$ has compact resolvent.

\vspace{.5cm}

\noindent Part (2):  The largest eigenvalue of $\mathcal{L}^{(R)}$ is the negative  inverse of the largest eigenvalue for $-\big(\mathcal{L}^{(R)}\big)^{-1}$.   Since  $-\big(\mathcal{L}^{(R)}\big)^{-1}$ has a strictly positive integral kernel $G(x,z)$,   the eigenfunction $\phi$ associated with the leading eigenvalue of $-\big(\mathcal{L}^{(R)}\big)^{-1}$ is strictly positive-valued (for the correct choice of phase) and unique.  The leading eigenvalue for $-\big(\mathcal{L}^{(R)}\big)^{-1}$ is  positive and  given by the convex integral of values 
\begin{align}\label{Hab}
\int_{\R^{+}}dz\frac{\phi(z)}{\|\phi\|_{1}}\int_{\R^{+}}dx G(x,z).
\end{align}
Note that  I have the following equality:
\begin{align}\label{Jot}
 \int_{\R^{+} }dx G(x,z)=&\frac{ z^{n-1}  e^{\int_{0}^{x}dv\frac{ v  }{R(v)} }   }{ R(z)  } \int_{z}^{\infty}daa^{2-n}e^{-\int_{0}^{a}dv\frac{v}{R(v)}   }  . 
\end{align}


\vspace{.5cm}

\noindent Part (3):  As remarked in Part (2), the eigenfunction $\phi(x)$ with leading eigenvalue $E:=\overline{\Sigma}\big(\mathcal{L}^{(R)}\big)<0$ must be strictly positive for all  $x\in \R^{+}$.    

By Prop.~\ref{LemSelfAdj} $ \Delta_{n}$ is relatively bounded to $\mathcal{L}^{(R)}$, and thus the eigenfunctions of $\mathcal{L}^{(R)}$ lie in the domain of $\Delta_{n}$.  The continuity of $R(x)\big(\Delta_{n}\phi\big)(x) $ follows from the equality
  \begin{align}\label{Jade} - x\frac{d\phi}{d x}(x)= -E\phi(x)+ R(x)\big(\Delta_{n}\phi\big)(x).    
  \end{align}
since $\phi$ and $\frac{d\phi}{dx}$ are continuous.  Since $R(x)\geq r_{1}$ is bounded away from zero,  $\Delta_{n}\phi$ must be  continuous and equal to zero at any radial inflection point for $\phi$.  In terms of the function $\psi(y):=\phi\big(y^{\frac{1}{2-n}}\big)$, the equation~(\ref{Jade}) can be written as
\begin{align}\label{JadeII}
-(2-n)y\frac{d\psi}{dy}(y)=-E\psi(y)+(2-n)^{2}R\big(y^{\frac{1}{2-n}}\big) y^{\frac{2(1-n)}{2-n}}\frac{d^2 \psi}{dy^2} (y).
\end{align}
Since $(2-n)^{2}y^{\frac{2(1-n)}{2-n}}\frac{d^{2}\psi}{dy^2 }(y)=\big(\Delta_{n}\phi\big)(x)$ for $y=x^{2-n}$, a radial inflection point for $\phi$ occurs at the $\frac{1}{2-n}$ power of an inflection point for $\psi$.  Thus it is sufficient to work with $\psi$.

From~(\ref{JadeII}) and $E<0$, we can see that $\frac{d^{2}\psi}{dy^2}(y)$  is negative in a region around the origin,  $y< \mathbf{c}$,  where $\mathbf{c}>0$ denotes the  inflection point closest to the origin over the interval $(0,\infty)$.  An inflection point for $\psi$ must  exist since $\psi$ is positive, continuously differentiable, and decaying at infinity.   By my remark above, $\frac{d^2\psi}{dy^2}(y)=y^{\frac{2(n-1)}{2-n}}\big(\Delta_{n} \phi \big)(y^{\frac{1}{2-n}} )$ must be zero at inflection points.   Recall that  $\psi$ has a Neumann boundary condition  at zero.    Since $\frac{d\psi}{dy}(0)=0$ and the derivative of $\frac{d\psi}{dy}(y)$ is negative over the interval $(0,\frak{c})$, we must have that $\frac{d\psi}{dy}(y)$ is negative over the interval $(0,\frak{c}]$.  It will suffice for me to show that $\frac{d\psi}{dy}$ and $\frac{d^{2}\psi}{dy^{2}}$ are  nonzero for  $y>\frak{c}$.   Suppose to reach a contradiction that there is some point $u\in (\frak{c},\infty)$ such that either
\begin{align}\label{Lulz}
(\textup{i}).\,\,\frac{d\psi }{d y}(u)=0\hspace{1.5cm} \text{or}\hspace{1.5cm} (\textup{ii}).\,\,  \frac{ d^2 \psi  }{ dy^2  }(u)=0.
\end{align}
  I will let $u$ denote the smallest such value.  Notice that I can not have  both  $\frac{d\psi }{d y}(u)=0$ and  $\frac{d^{2}\psi }{d y^2}(u)=0$ since the term $-E\psi(y)$ in~(\ref{JadeII}) is strictly positive.   For the cases~(\ref{Lulz}), the following reasoning applies:
\begin{enumerate}
\item[(i).]   If $\frac{d\psi }{d y}(u)=0$, then the continuous function $R\big(y^{\frac{1}{2-n}} \big)y^{\frac{2(1-n)}{2-n}}\frac{d^2 \psi}{dy^2}$ must be positive over the interval $[ \mathbf{c}  , u]$.   This, however, contradicts equation~(\ref{Jade}) for $y=u$ since the terms on the right side of~(\ref{JadeII}) are both positive.   
\item [(ii).]  If $  \frac{ d^2 \psi  }{ dy^2}(  u)=0$, then  $ \frac{d\psi}{d y}(y)$ must be negative over the interval $[ \mathbf{c}  , u]$.  A  linear approximation of equation~(\ref{JadeII})  about the point $y=u$ yields that
\begin{align}\label{Beck}
\delta \frac{d\psi}{d y}(u)\Big(\frac{1}{2-n} -\frac{E}{(2-n)^2}\Big)+\mathit{O}\big(\delta^{2}\big)= R\Big((u+\delta )^{\frac{1}{2-n}}\Big)\big|u+\delta\big|^{\frac{2(1-n)}{2-n}}  \frac{d^2 \psi}{dy^2} (u+\delta),\quad \quad |\delta|\ll 1 .  
\end{align}
Since $ \frac{d\psi}{d y}(u)$ and $\frac{E}{2-n}$ are negative,  it follows from~(\ref{Beck}) that  $u$ must be  an  inflection point at which the concavity changes from down to up.  However, by my definitions, $\psi(y)$ is concave up over the interval $(\mathbf{c},u)$, which brings me to a contradiction.  
\end{enumerate}
It follows that $\psi(y)$ is strictly decreasing and has exactly one  inflection point over that interval.

\vspace{.5cm}

\noindent Part (4):  Using the backward representation of the dynamics, I have the equality
$$ \int_{\R^{+}}da \psi_{b,s }^{(R)}(a)f(a)= \big(e^{\frac{s}{2}\mathcal{L}^{(R)}}f \big)(b),$$  
where by assumption $f\in \mathbf{D}$ and thus $f,\Delta_{n}f\in L^{2}\big(\R^+,w(x)dx\big)$.   
 The function $e^{s\mathcal{L}^{(R)}}f$ can be written as 
\begin{align*}
 e^{s\mathcal{L}^{(R)}}f= e^{\frac{s}{2}E}\langle \phi|\,  f\rangle_{R}     \, \phi+e^{\frac{s}{2}\mathcal{L}^{(R)}}g    \hspace{1cm}\text{for}\hspace{1cm}g:= f-\langle \phi|\,  f\rangle_{R}     \, \phi,   
 \end{align*}
where, as before, $\phi$ is the eigenfunction for $\mathcal{L}^{(R)}$ corresponding to the leading eigenvalue $E:=\overline{\Sigma}\big(\mathcal{L}^{(R)}\big)$.  Note that  $g\in \mathbf{D}$   by the assumption $f\in \mathbf{D}$.     Let $E_{1}$ be the largest eigenvalue following $E$. 
  I will show that $e^{\frac{s}{2}\mathcal{L}^{(R)}}g$ decays uniformly  with exponential rate $-E_{1}$ as $s\rightarrow \infty$ over any compact interval $[0,L]$. I have  the following inequalities:
\begin{eqnarray}\label{Turnip}
\big\| e^{\frac{s}{2}\mathcal{L}^{(R)}}g\big\|_{2,R}& \leq & e^{\frac{s}{2}E_{1}}\|g\|_{2,R},  \\  \big\| \Delta_{n} e^{\frac{s}{2}\mathcal{L}^{(R)}}g\big\|_{2,R}& \leq &C e^{\frac{s}{2}E_{1}}\Big(   \|g\|_{2,R}+ \big\|\Delta_{n}g\big\|_{2,R} \Big),\label{Pumpkin}
\end{eqnarray}   
 where the second inequality holds for some $C>0$.   The first inequality in~(\ref{Turnip}) uses that $g$ lies in the orthogonal space to $\phi$.  For the second inequality in~(\ref{Turnip}), recall from Prop.~\ref{LemSelfAdj} that $ \Delta_{n}$ and  $\mathcal{L}^{(R)}$ are mutually relative bounded  so that I have the first and third inequalities below for some constants $c,C>0$:
\begin{align}
\big\| \Delta_{n} e^{\frac{s}{2}\mathcal{L}^{(R)}}g\big\|_{2,R}\leq & c\Big(   \big\|e^{\frac{s}{2}\mathcal{L}^{(R)}}g\big\|_{2,R}+ \big\|\mathcal{L}^{(R)}e^{\frac{s}{2}\mathcal{L}^{(R)}}g\big\|_{2,R} \Big)\nonumber \\  \leq  & ce^{\frac{s}{2}E_{1}}\Big(   \| g\|_{2,R}+ \big\|\mathcal{L}^{(R)}g\big\|_{2,R} \Big)\nonumber \\ \leq & Ce^{\frac{s}{2}E_{1}}\Big(   \| g\|_{2,R}+ \big\|\Delta_{n}g\big\|_{2,R} \Big).
 \end{align}
 The second inequality above follows since $\mathcal{L}^{(R)}$ and $e^{\frac{s}{2}\mathcal{L}^{(R)}}$ commute and $g$, $\mathcal{L}^{(R)}g$ are orthogonal to $\phi$.

Next I use~(\ref{Turnip}) and~(\ref{Pumpkin}) to  bound the supremum of $e^{\frac{s}{2}\mathcal{L}^{(R)}}g$ over a finite interval $[0,L]$.   For $L\geq 1$ there must be a point $x\in [0,L]$ such that the first inequality below holds
\begin{align}\label{Mug}  
 \big|(e^{\frac{s}{2}\mathcal{L}^{(R)}}g)(x)\big|  \leq  \sqrt{ r_{2} } \big\|e^{\frac{s}{2}\mathcal{L}^{(R)}}g\big\|_{2,R} \leq   \sqrt{r_{2} }e^{\frac{s}{2}E_{1}}\|g\|_{2,R}.
\end{align}
  For $x$ satisfying~(\ref{Mug}) the fundamental theorem of calculus  applied to the function $e^{\frac{s}{2}\mathcal{L}^{(R)}}g$ gives the first inequality  below:
\begin{align} 
\sup_{y\in [0,L]}\big|\big(e^{\frac{s}{2}\mathcal{L}^{(R)}}g\big)(y)\big|\leq & \big|\big(e^{\frac{s}{2}\mathcal{L}^{(R)}}g\big)(x)\big|+\int_{0}^{L}dz\Big|\frac{d}{dz}\big(e^{\frac{s}{2}\mathcal{L}^{(R)}}g\big)(z)\Big|
\nonumber \\ \leq & C\sqrt{ r_{2}}e^{\frac{s}{2}E_{1}}\|g\|_{2,R}+\sqrt{Lr_{2} }\Big\|\frac{d}{dz}e^{\frac{s}{2}\mathcal{L}^{(R)}}g\Big\|_{2,R} \nonumber \\ \leq & C\sqrt{r_{2}}e^{\frac{s}{2}E_{1}}\|g\|_{2,R}+\sqrt{L\frac{r_{2}^{3} }{r_{1}}  }\big\| \Delta_{n} e^{\frac{s}{2}\mathcal{L}^{(R)}}g\big\|_{2,R}.\label{Gyro}
\end{align}
The second inequality is by Jensen's inequality and $R(x)\leq r_{2}$.  The last inequality in~(\ref{Gyro}) follows from the relation $\| \frac{d}{dx} e^{\frac{s}{2}\mathcal{L}^{(R)}}g\|_{2,R}\leq  \frac{ r_{2}}{ \sqrt{ r_{1}}} \| \Delta_{n} e^{\frac{s}{2}\mathcal{L}^{(R)}}g\|_{2,R}$, which can be seen from the equality~(\ref{IntPart}).  Finally, the last line of~(\ref{Gyro}) decays on the order $e^{\frac{s}{2}E_{1}}$ by~(\ref{Turnip}) and~(\ref{Pumpkin}).

\end{proof}

\section{The extremal strategies }\label{SecExtremal}

For $\frak{c}>0$, define $\ell_{\frak{c}}(x) :=r_{1}+(r_{2}-r_{1})\chi(x>\frak{c}) $.  These functions correspond to extremal strategies in which the random walker switches between from the lowest possible diffusion rate to the highest at a cut-off value $\frak{c}>0$.      I denote the corresponding generator by $\mathcal{L}_{\frak{c}}:=\mathcal{L}^{(\ell_{\frak{c}})}$. 
 An arbitrary function  $R:\R^{+}\rightarrow [r_{1},r_{2}]$ that is increasing and satisfies $\lim_{x\searrow 0}R(x)=r_{1}$ and $\lim_{x\nearrow \infty}R(x)=r_{2}$, i.e., that determines a `reasonable' strategy for the random walker, can be written as a convex combination of the step functions  $\ell_{\frak{c}}$: 
$$ R(x)=\frac{1}{r_{2}-r_{1}}\int_{0}^{\infty}dR(\frak{c})\ell_{\frak{c}}(x).  $$
  By the linear dependence of  $\mathcal{L}^{(R)}$  on  $R$,  the above convex combination extends to the generators:
$$ \mathcal{L}^{(R)}=\frac{1}{r_{2}-r_{1}}\int_{0}^{\infty}dR  (\frak{c})\mathcal{L}_{\frak{c}}.     $$ 
This  suggests that a generator with maximizing principle eigenvalue should have the form $\mathcal{L}_{\frak{c}}$ for some $\frak{c}>0$, which is the main statement of the following lemma.    The uniqueness of the maximizing $\frak{c}>0$ is established in Lem.~\ref{LemExtAnal}.

\begin{lemma}  \label{LemExtremal}
For any measurable function $R:\R^{+}\rightarrow [r_{1},r_{2}]$, the following inequality holds:
$$ \overline{\Sigma}\big(\mathcal{L}^{(R)}\big)\leq \sup_{\frak{c}\in (0,\infty) }\overline{\Sigma}\big(\mathcal{L}_{\frak{c}}\big) . $$
Moreover, the above supremum is attained as a maximum for a value $\frak{c}>0$ satisfying the following property: The unique radial inflection point over the interval $(0,\infty)$  for the eigenfunction $\phi_{\frak{c}}$ corresponding to the eigenvalue $\overline{\Sigma}\big(\mathcal{L}_{\frak{c}}\big)$ (see  Part (3) of Prop.~\ref{PropBasics}) occurs at the value $\frak{c}$.

\end{lemma}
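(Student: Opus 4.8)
\emph{The plan.} The argument rests on the pointwise eigenvalue identity for $\mathcal{L}^{(R)}$ together with the Rayleigh characterization of $\overline{\Sigma}\big(\mathcal{L}_{\frak{c}}\big)$ as the top of the spectrum of a self-adjoint operator. Fix a measurable $R:\R^{+}\rightarrow[r_{1},r_{2}]$, let $\phi>0$ be the principal eigenfunction of $\mathcal{L}^{(R)}$ with eigenvalue $E:=\overline{\Sigma}\big(\mathcal{L}^{(R)}\big)<0$, and let $\mathbf{c}>0$ be its unique radial inflection point, so by Part (3) of Prop.~\ref{PropBasics} we have $\Delta_{n}\phi\leq 0$ on $(0,\mathbf{c})$ and $\Delta_{n}\phi\geq 0$ on $(\mathbf{c},\infty)$. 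Since $\ell_{\mathbf{c}}\equiv r_{1}\leq R$ on $(0,\mathbf{c})$ and $\ell_{\mathbf{c}}\equiv r_{2}\geq R$ on $(\mathbf{c},\infty)$, the function $\big(\ell_{\mathbf{c}}-R\big)\Delta_{n}\phi$ is nonnegative everywhere, so $\mathcal{L}_{\mathbf{c}}\phi=\mathcal{L}^{(R)}\phi+\big(\ell_{\mathbf{c}}-R\big)\Delta_{n}\phi\geq E\phi$ pointwise. A short computation shows that the weights of Sect.~\ref{SecGenerators} for $R$ and for $\ell_{\mathbf{c}}$ satisfy $w_{\ell_{\mathbf{c}}}\leq C_{\mathbf{c}}\,w_{R}$ pointwise --- the exponent difference $\int_{0}^{x}dv\, v\big(\ell_{\mathbf{c}}(v)^{-1}-R(v)^{-1}\big)$ is nonnegative-increasing on $(0,\mathbf{c})$ and decreasing thereafter, hence maximized at $x=\mathbf{c}$ --- so $L^{2}\big(\R^{+},w_{R}dx\big)\subseteq L^{2}\big(\R^{+},w_{\ell_{\mathbf{c}}}dx\big)$ and $\phi$ lies in the domain of $\mathcal{L}_{\mathbf{c}}$ (the Neumann condition at $0$ being unchanged). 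Integrating $\mathcal{L}_{\mathbf{c}}\phi\geq E\phi$ against the nonnegative function $w_{\ell_{\mathbf{c}}}\phi$ gives $\big\langle \phi \big| \mathcal{L}_{\mathbf{c}}\phi\big\rangle_{\ell_{\mathbf{c}}}\geq E\|\phi\|_{2,\ell_{\mathbf{c}}}^{2}$, whence $\overline{\Sigma}\big(\mathcal{L}_{\mathbf{c}}\big)\geq E$ by the variational principle. Therefore $\overline{\Sigma}\big(\mathcal{L}^{(R)}\big)=E\leq\overline{\Sigma}\big(\mathcal{L}_{\mathbf{c}}\big)\leq\sup_{\frak{c}\in(0,\infty)}\overline{\Sigma}\big(\mathcal{L}_{\frak{c}}\big)$, proving the inequality.

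\emph{Attainment of the supremum.} Write $E_{\frak{c}}:=\overline{\Sigma}\big(\mathcal{L}_{\frak{c}}\big)$. I would establish: (a) $\frak{c}\mapsto E_{\frak{c}}$ is continuous on $(0,\infty)$; (b) $E_{\frak{c}}\to\overline{\Sigma}\big(\mathcal{L}^{(r_{2})}\big)$ as $\frak{c}\searrow 0$ and $E_{\frak{c}}\to\overline{\Sigma}\big(\mathcal{L}^{(r_{1})}\big)$ as $\frak{c}\nearrow\infty$; (c) $\overline{\Sigma}\big(\mathcal{L}^{(r)}\big)=-n$ for every constant $r>0$, since $e^{-x^{2}/2r}$ is a positive eigenfunction of $x\frac{d}{dx}+r\Delta_{n}$ with eigenvalue $-n$ and hence the principal one by Part (3) of Prop.~\ref{PropBasics}; and (d) $E_{\frak{c}}>-n$ for at least one $\frak{c}$. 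For (a)--(b) I would pass, via the isometry of Part (1) of Prop.~\ref{PropBasics}, to the Hilbert--Schmidt realization of $-\mathcal{L}_{\frak{c}}^{-1}$ with kernel $\widehat{G}_{\frak{c}}$, which depends continuously on $\frak{c}$ in $L^{2}(\R^{+}\times\R^{+})$ and converges as $\frak{c}\to 0$ (resp. $\infty$) to the kernel for the constant coefficient $r_{2}$ (resp. $r_{1}$), so that the leading eigenvalue $|E_{\frak{c}}|^{-1}$ varies continuously and has the stated limits. For (d), test the Rayleigh quotient of $\mathcal{L}_{\frak{c}}$ against $\phi_{0}(x):=e^{-x^{2}/2r_{2}}$: using $\mathcal{L}_{\frak{c}}\phi_{0}=-n\phi_{0}-(r_{2}-r_{1})\chi(\cdot\leq\frak{c})\Delta_{n}\phi_{0}$ and $\Delta_{n}\phi_{0}<0$ on $\big(0,\sqrt{nr_{2}}\big)$, one gets for $0<\frak{c}<\sqrt{nr_{2}}$ that
\begin{align*}
\frac{\big\langle \phi_{0}\big|\mathcal{L}_{\frak{c}}\phi_{0}\big\rangle_{\ell_{\frak{c}}}}{\|\phi_{0}\|_{2,\ell_{\frak{c}}}^{2}}=-n-(r_{2}-r_{1})\frac{\int_{0}^{\frak{c}}dx\, w_{\ell_{\frak{c}}}(x)\phi_{0}(x)\big(\Delta_{n}\phi_{0}\big)(x)}{\|\phi_{0}\|_{2,\ell_{\frak{c}}}^{2}}>-n ,
\end{align*}
so $E_{\frak{c}}>-n$. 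A continuous function on $(0,\infty)$ both of whose one-sided limits equal $-n$ and which exceeds $-n$ somewhere attains its supremum, at an interior point $\frak{c}^{*}$ lying in a compact subinterval.

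\emph{The inflection point coincides with the cutoff.} Let $\frak{c}^{*}$ be any maximizer, $\phi_{\frak{c}^{*}}$ its principal eigenfunction, and $\mathbf{c}^{*}>0$ the radial inflection point of $\phi_{\frak{c}^{*}}$. Running the first step with $R=\ell_{\frak{c}^{*}}$ shows $\mathcal{L}_{\mathbf{c}^{*}}\phi_{\frak{c}^{*}}\geq E_{\frak{c}^{*}}\phi_{\frak{c}^{*}}$ pointwise. If $\mathbf{c}^{*}\neq\frak{c}^{*}$, then on the nonempty open interval $I$ with endpoints $\mathbf{c}^{*}$ and $\frak{c}^{*}$ one has $\ell_{\mathbf{c}^{*}}\neq\ell_{\frak{c}^{*}}$ while $\Delta_{n}\phi_{\frak{c}^{*}}\neq 0$, so the inequality is strict on $I$; integrating against the strictly positive $w_{\ell_{\mathbf{c}^{*}}}\phi_{\frak{c}^{*}}$ forces $\big\langle \phi_{\frak{c}^{*}}\big|\mathcal{L}_{\mathbf{c}^{*}}\phi_{\frak{c}^{*}}\big\rangle_{\ell_{\mathbf{c}^{*}}}>E_{\frak{c}^{*}}\|\phi_{\frak{c}^{*}}\|_{2,\ell_{\mathbf{c}^{*}}}^{2}$, hence $\overline{\Sigma}\big(\mathcal{L}_{\mathbf{c}^{*}}\big)>E_{\frak{c}^{*}}=\sup_{\frak{c}}\overline{\Sigma}\big(\mathcal{L}_{\frak{c}}\big)$, a contradiction. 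Thus $\mathbf{c}^{*}=\frak{c}^{*}$.

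\emph{Main obstacle.} The conceptual content is short --- a pointwise supersolution comparison and its strict version --- so the bulk of the work is analytic bookkeeping. The delicate points are: checking that the $\mathcal{L}^{(R)}$-eigenfunction genuinely lies in the domain of $\mathcal{L}_{\mathbf{c}}$ despite the $R$-dependent weight (the bound $w_{\ell_{\mathbf{c}}}\leq C_{\mathbf{c}}w_{R}$ is the crux); and establishing the continuity of $\frak{c}\mapsto E_{\frak{c}}$ together with its boundary limits, where one must be careful because the crude pointwise estimate on $\widehat{G}_{\frak{c}}(x,z)$ is not square-integrable --- the finiteness and $\frak{c}$-continuity of $\|\widehat{G}_{\frak{c}}\|_{L^{2}}$ only emerge after keeping the inner integral $\int_{0}^{x}dz\, w_{\ell_{\frak{c}}}(z)\sim x^{n-2}e^{\int_{0}^{x}dv\, v/\ell_{\frak{c}}(v)}$ intact, so that the Gaussian factors cancel against those in $w_{\ell_{\frak{c}}}(x)\,v_{+}(x)^{2}$.
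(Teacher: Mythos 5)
Your argument is correct, and its core is genuinely different from the paper's. The paper proves the reduction to step functions by first-order perturbation theory: for any $R$ that is not the step function $\ell_{\mathbf{c}}$ cut at the radial inflection point $\mathbf{c}$ of its principal eigenfunction $\phi$, the Kato expansion gives $\overline{\Sigma}\big(\mathcal{L}^{(R+hA)}\big)=\overline{\Sigma}\big(\mathcal{L}^{(R)}\big)+h\big\langle \phi\big|A\Delta_{n}\phi\big\rangle+\mathit{o}(h)$ with $A=\ell_{\mathbf{c}}-R$, and the sign pattern of $\Delta_{n}\phi$ makes the first variation strictly positive; this is then combined with continuity of $\mathfrak{c}\mapsto\overline{\Sigma}(\mathcal{L}_{\mathfrak{c}})$ and the endpoint values $-n$ coming from the Gaussian eigenfunctions of the constant-coefficient operators. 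You replace the local first-variation step by a global supersolution comparison: the same sign pattern gives $(\ell_{\mathbf{c}}-R)\Delta_{n}\phi\geq 0$, hence $\mathcal{L}_{\mathbf{c}}\phi\geq E\phi$ pointwise, and the Rayleigh quotient in the $\ell_{\mathbf{c}}$-weighted space yields $\overline{\Sigma}(\mathcal{L}_{\mathbf{c}})\geq\overline{\Sigma}(\mathcal{L}^{(R)})$ outright. This buys two things: the inequality is proved for every measurable $R$ without having to know that a maximizer exists over all of $B(\R^{+},[r_{1},r_{2}])$ (the paper's local-improvement step, taken literally, only shows a non-extremal $R$ is not a local maximum), and the ``inflection point $=$ cutoff'' property of a maximizing $\mathfrak{c}$ drops out of the strict version of the same comparison rather than from stationarity of a first variation; the price is the bookkeeping you correctly flag — the weight comparison $w_{\ell_{\mathbf{c}}}\leq C_{\mathbf{c}}w_{R}$ needed to place $\phi$ in the domain of $\mathcal{L}_{\mathbf{c}}$, which indeed holds since the exponent $\int_{0}^{x}dv\,v\big(\ell_{\mathbf{c}}(v)^{-1}-R(v)^{-1}\big)$ peaks at $x=\mathbf{c}$. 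The paper's perturbative route avoids juggling two weighted Hilbert spaces and supplies the explicit derivative formula that motivates bang-bang optimality. For attainment the two proofs essentially coincide (continuity of $E_{\mathfrak{c}}$ via Green-function/Hilbert--Schmidt estimates, value $-n$ at the endpoints via $e^{-x^{2}/2r}$); your explicit Rayleigh test with $e^{-x^{2}/2r_{2}}$ showing $E_{\mathfrak{c}}>-n$ for small $\mathfrak{c}$ is a concrete substitute for the paper's appeal to its perturbation step at the constant strategies, and your dismissal of the crude non-square-integrable kernel bound matches the care the paper leaves implicit.
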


\begin{proof} Pick some $R\in B\big(\R^+,[r_{1},r_{2}]\big)$.   Let $\phi$ be the eigenfunction corresponding to the principle eigenvalue of $\mathcal{L}^{(R)}$ and $\frak{c}>0$ be the unique radial inflection point of $\phi$.  I will prove the following:
\begin{enumerate}
\item[(i).]   For any $R$ that does not have the special form $R=\ell_{\frak{c}}$, there exists a small perturbation $R'=R+dR$ such that $ \overline{\Sigma}\big(\mathcal{L}^{(R')}\big)>\overline{\Sigma}\big(\mathcal{L}^{(R)}\big)$.

\item[(ii).]   The function $f:(0,\infty)\rightarrow (-\infty,0) $ defined by $f(a):= \overline{\Sigma}\big(\mathcal{L}_{a}\big)$ has a maximum. 

\end{enumerate}

\noindent (\textup{i}).  
  The perturbations that I consider will be of the form  
$$ \mathcal{L}^{(R+hA)}= \mathcal{L}^{(R)}+hA(x)\Delta_{n}   $$
 for $h\ll 1$ and  a well-chosen  bounded  function $A:\R^+\rightarrow \R$. 
 By Prop.~\ref{LemSelfAdj} the operator $\Delta_{n}$ is relatively bounded to $\mathcal{L}^{(R)}$.  It follows that operators of the form $A(x)\Delta_{n}$ are also relatively bounded to $\mathcal{L}^{(R)}$ since $A$ is bounded, and  I can use standard perturbation theory~\cite{Kato} to characterize the  leading eigenvalue of  $\mathcal{L}^{(R+hA)}$ for small $h>0$:
\begin{align}\label{Standard}
 \overline{\Sigma}\big(\mathcal{L}^{(R+hA)}\big)=
\overline{\Sigma}\big(\mathcal{L}^{(R)}\big)+ h\big\langle \phi\big|\,   A(x)\Delta_{n}\phi\big\rangle +\mathit{o}(h).
 \end{align}

I need to show that there is an $A$ such that $\big\langle \phi\big|\,   A(x)\Delta_{n}\phi\big\rangle$ is positive and $R+hA\in  B\big(\R^+,[r_{1},r_{2}]\big)$ for $0<h\ll 1$.  By part (3) of Prop.~\ref{PropBasics}, the eigenfunction corresponding to the principle eigenvalue must satisfy that
 \begin{align}\label{Uruguay}
\big(\Delta_{n}\phi)(x)<0 \quad \text{for} \quad x< \mathbf{c} \hspace{.7cm}\text{and}  \hspace{.7cm}\big(\Delta_{n}\phi\big)(x)>0 \quad \text{for} \quad x>  \mathbf{c}
 \end{align}
 for some  $\mathbf{c}>0$.
Define $A:\R^+\rightarrow \R$ to be of the form
 \begin{align}
A(x):= \left\{  \begin{array}{cc} r_{2}-R(x) & \quad \hspace{.1cm} x> \mathbf{c},    \\  \quad & \quad  \\  r_{1}-R(x)   & \, \quad x\leq \mathbf{c} . \end{array} \right.  
\end{align}
Notice that $R(x)+hA(x)$ maps into the interval $[r_{1},r_{2}]$ for every $h\in [0,1]$. 
Since the values $\phi(x)$ are strictly positive by  Part (3) of Prop.~\ref{PropBasics}, the property~(\ref{Uruguay}) implies that the expression $ \big\langle \phi\big|\,   A(x)\Delta_{n}\phi\big\rangle $  must be strictly positive unless $A(x)=0$.  However, $A(x)=0$ implies that $R=\ell_{\mathbf{c}} $.

\vspace{.4cm}

\noindent (\textup{ii}).  I can extend the definition of $\mathcal{L}_{a}$ to  $a\in \{0,\infty\}$ by setting  $\mathcal{L}_{0}= x\frac{d}{dx}+r_{1}\Delta_{n}$ and $\mathcal{L}_{\infty}= x\frac{d}{dx}+r_{2}\Delta_{n}$.   Note that the principal eigenvalue of $\mathcal{L}_{a}$ is the negative inverse of the operator norm of its compact resolvent: $ \overline{\Sigma}(\mathcal{L}_{a})=-\big\| (\mathcal{L}_{a})^{-1} \big\|_{\infty}^{-1}$.   The continuity of $g(a):=\big\| (\mathcal{L}_{a})^{-1} \big\|_{\infty}$ as a function over $a\in [0,\infty]$ can be established through simple estimates of the Green function of $(\mathcal{L}_{a})^{-1}$,  see~(\ref{Haifa}) with $R(x)=\ell_{c}(x)$, and thus  $f(a):=\overline{\Sigma}\big(\mathcal{L}_{a}\big)$ is continuous.

For  $\phi_{0}(x)=e^{-\frac{x^2}{2r_{1}} }$ and $\phi_{\infty}(x)=e^{-\frac{x^2}{2r_{2}}}$, explicit computations yield that
$$\mathcal{L}_{0}\phi_{0}=-n\phi_{0} \hspace{1cm}\text{and}\hspace{1cm}  \mathcal{L}_{\infty} \phi_{\infty}=-n\phi_{\infty}. $$
Since the functions $\phi_{0}$ and  $\phi_{\infty}$ are positive-valued, they must be the respective eigenvectors corresponding the principle eigenvalues of $ \mathcal{L}_{0}$ and $  \mathcal{L}_{\infty}$, respectively.  It follows that $f(0)=f(\infty)=-n$.   Moreover, $f(a)$ can not have maxima at $a=0,\infty$ by part (\textup{i}), and thus a maximum must occur in $a\in (0,\infty)$.

\end{proof}

By Part (4) of Prop.~\ref{PropBasics}, it is sufficient to focus attention on the extremal generators $\mathcal{L}_{\frak{c}}$.   As before let $I_{\nu}$ and $K_{\nu}$ be modified Bessel functions of the first and second kind, respectively, with index $\nu$; see~\cite{Handbook} for basic properties and estimates involving modified Bessel functions.   Recall that 
$$ I_{\nu}(z)=\big(\frac{z}{2} \big)^{\nu}\sum_{k=0}^{\infty}\frac{  \big( \frac{z^{2}}{4} \big)^{k} }{k!\Gamma(\nu+k+1)  }\hspace{1cm}\text{and}\hspace{1cm} K_{\nu}(z)=\frac{\pi}{2}\frac{  I_{-\nu}(z)-I_{\nu}(z) }{\sin(\nu \pi)  }, $$
where $K_{\nu}(z)$ must be defined as a limit of the above relation when $\nu$ is an integer.  The modified Bessel functions have the following asymptotics for $z\gg 1$: 
$$ I_{\nu}(z)=\frac{  e^{z} }{\sqrt{2\pi z} }\big( 1+\mathit{O}(z^{-1})  \big) \hspace{1cm}\text{and}\hspace{1cm}  K_{\nu}(z) =\sqrt{\frac{\pi}{2z}}e^{-z}\big( 1+\mathit{O}(z^{-1})   \big). $$

Define  $ S^{\pm}_{\nu}:\R^{+}\rightarrow \R^{+}$
$$  S^{+}_{\nu}(z):=   z^{-\nu}K_{\nu}(z)   \hspace{1cm}\text{and}\hspace{1cm}   S^{-}_{\nu}(z):=  z^{-\nu}I_{\nu}(z). $$
\begin{identities}
The identities $I_{\nu}'(z)=I_{\nu\pm 1}(z)\pm\frac{\nu}{z}I_{\nu}(z)$ and  $K_{\nu}'(z)=-K_{\nu\pm 1}(z)\pm\frac{\nu}{z}K_{\nu}(z)$
\begin{enumerate}

\item  $ \frac{d}{dz} \big[ S^{\pm}_{\nu}(z) \big]=\mp z S^{\pm}_{\nu+1}(z)     $

\item $\frac{d}{dz}\big[ S^{\pm}_{\nu}(z)    \big]=\mp S_{\nu-1}^{\pm}(z)-\frac{2\nu}{z}S_{\nu}^{\pm}(z)$

\item  $ \frac{d}{dz} \big[z^{2\nu} S^{\pm}_{\nu}(z) \big]=\mp z^{2\nu-1} S^{\pm}_{\nu-1}(z)     $

\end{enumerate}

\end{identities}

\begin{lemma} \label{LemExtAnal}

Let $r_{1}<r_{2}$ and $V=\sqrt{\frac{r_{2}}{r_{1}} }$.  For $\eta(n,V)$, $\kappa(n,V)$ defined as in Thm.~\ref{ThmMain}, 
$$ \max_{\frak{c}\in \R^{+}  }\overline{\Sigma}\big(\mathcal{L}_{\frak{c}}\big)  =  \eta(n,V)-n.$$
The maximizing value $\frak{c}\in \R^{+}$ is unique and given by $\frak{c}=\kappa(n,V)\sqrt{r_{1} }$.

\end{lemma}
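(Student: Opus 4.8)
The plan is to solve the eigenvalue problem for $\mathcal{L}_{\frak{c}}$ explicitly on the two regions $x<\frak{c}$ and $x>\frak{c}$, where $R$ is locally constant (equal to $r_1$ and $r_2$ respectively), then impose matching at $x=\frak{c}$, and finally invoke Lemma~\ref{LemExtremal} to see that the maximizing cut-off is precisely the radial inflection point of its own eigenfunction. First I would set $E=\eta-n$ and rewrite the eigenequation $x\phi' + r\,\Delta_n\phi = E\phi$ in the variable $y=x^{2-n}$ (or work directly), reducing each piece to a confluent-hypergeometric / Bessel-type ODE. The key observation is that the functions $Y^{\pm}_{\nu,\eta}$ defined in the introduction, being integrals of $S^{\pm}_\nu$ against a Gaussian weight, are exactly the two fundamental solutions of this ODE: one regular at the origin (the $S^{-}$, i.e.\ $I_\nu$, branch) and one decaying at infinity (the $S^{+}$, i.e.\ $K_\nu$, branch). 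This is what forces the eigenfunction to have the form
\begin{align*}
\phi_{\frak{c}}(x)= \gamma\, Y^{-}_{\frac{n-2}{2},\eta}\!\Big(\frac{x}{\sqrt{r_1}}\Big)\chi\big(x\le \frak{c}\big) + Y^{+}_{\frac{n-2}{2},\eta}\!\Big(\frac{x}{\sqrt{r_2}}\Big)\chi\big(x>\frak{c}\big),
\end{align*}
the inner branch chosen to meet the Neumann condition $\lim_{x\searrow0}x^{n-1}\phi'(x)=0$ from Proposition~\ref{LemSelfAdj}, the outer branch chosen for $L^2$-integrability against the weight $w$. I would verify the relevant ODE identities using the three identities listed for $S^{\pm}_\nu$, and check via the large-$z$ asymptotics of $I_\nu,K_\nu$ that the $Y^{+}$ branch indeed lies in $L^2(\R^+,w(x)dx)$ while the $Y^{-}$ branch does not.

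Next I would impose the two gluing conditions at $x=\frak{c}$: continuity of $\phi_{\frak{c}}$ (which defines $\gamma$) and continuity of $\phi_{\frak{c}}'$ (equivalently, since $R\Delta_n\phi$ is continuous by Part~(3) of Proposition~\ref{PropBasics} and $R$ jumps, $\Delta_n\phi$ need not be continuous at $\frak{c}$ unless $\frak{c}$ happens to be an inflection point). Writing $\kappa=\frak{c}/\sqrt{r_1}$ and $\kappa/V=\frak{c}/\sqrt{r_2}$, the $C^1$-matching condition becomes one transcendental equation relating $\eta$, $\kappa$, $V$; using Identity~1 ($\frac{d}{dz}S^{\pm}_\nu(z)=\mp z S^{\pm}_{\nu+1}(z)$) to express the derivatives of $Y^{\pm}$ in terms of $Y^{\pm}_{\nu+1}$, this should reduce exactly to the first equation of~(\ref{Krakow}), $\frac{Y^{+}_{n/2,\eta+2}(\kappa/V)}{Y^{+}_{(n-2)/2,\eta}(\kappa/V)}=\frac{n-\eta-\kappa^2/V^2}{\kappa^2/V^2}$ — here I expect the Gaussian factor $e^{-x^2/2}$ hidden in the $Y$'s to produce the polynomial term $n-\eta-\kappa^2/V^2$ after differentiation. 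So for \emph{each} admissible $\frak{c}$ this pins down the eigenvalue $E=\eta-n$ as a function of $\frak{c}$ through one equation; call it the dispersion relation.

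Then comes the optimization over $\frak{c}$. By Lemma~\ref{LemExtremal} the maximizing $\frak{c}$ is characterized by the property that the \emph{unique} radial inflection point of $\phi_{\frak{c}}$ equals $\frak{c}$ itself. The inflection point of the inner branch is the location where $\Delta_n Y^{-}_{(n-2)/2,\eta}$ vanishes; computing $\Delta_n$ of the inner branch from the eigenequation ($r_1\Delta_n\phi = E\phi - x\phi'$) and integrating, the condition "$\Delta_n\phi$ vanishes at $x=\frak{c}$ as approached from the left" should, after a change of variables $a=x/\sqrt{r_1}$ and an integration by parts using Identity~3, turn into exactly the second equation of~(\ref{Krakow}), namely $\kappa^{2-n}\int_0^\kappa Y^{-}_{(n-2)/2,\eta}(a)\,a^{n-1}e^{(a^2-\kappa^2)/2}\,da = Y^{-}_{(n-2)/2,\eta}(\kappa)$. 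Combined with the dispersion relation from the matching step, this is precisely the pair~(\ref{Krakow}) defining $(\eta(n,V),\kappa(n,V))$, giving $\max_{\frak{c}}\overline\Sigma(\mathcal{L}_{\frak{c}})=\eta(n,V)-n$ at $\frak{c}=\kappa(n,V)\sqrt{r_1}$. Uniqueness of the maximizing $\frak{c}$ would follow from monotonicity: I would show that $\frak{c}\mapsto\overline\Sigma(\mathcal{L}_{\frak{c}})$ has a unique critical point, e.g.\ by a Hadamard-type variational formula (the derivative of $\overline\Sigma(\mathcal{L}_{\frak{c}})$ in $\frak{c}$ is, up to a positive factor, $(r_2-r_1)\phi_{\frak{c}}(\frak{c})(\Delta_n\phi_{\frak{c}})(\frak{c})$ evaluated with the appropriate one-sided limit), which changes sign exactly once as $\frak{c}$ passes through the inflection point; together with $f(0)=f(\infty)=-n$ from Lemma~\ref{LemExtremal}~(ii) this forces a unique interior maximum.

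The main obstacle I anticipate is the bookkeeping in the matching step: correctly identifying that $Y^{+}_{\nu,\eta}$ and $Y^{-}_{\nu,\eta}$ really are the fundamental solutions (there is a Gaussian conjugation $\phi\mapsto e^{x^2/(2r)}\phi$ lurking, which converts $\mathcal{L}_{\frak{c}}$ into a self-adjoint Schrödinger-type operator whose solutions are $I_\nu,K_\nu$ times a Gaussian and a power), and then showing that the $C^1$ gluing condition collapses to the clean algebraic form in~(\ref{Krakow}) rather than something messier — this requires careful use of the Bessel identities and the integral representation of the $Y^{\pm}$, and getting the indices $\nu=\frac{n-2}{2}$ versus $\nu=\frac{n}{2}$ and the shift $\eta\mapsto\eta+2$ to line up exactly. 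The uniqueness/monotonicity argument is the second delicate point, since it needs the sign of a boundary term and an argument that the dispersion relation has a unique solution branch; I would lean on the already-established facts that each $\mathcal{L}_{\frak{c}}$ has a simple, isolated principle eigenvalue with a one-inflection-point eigenfunction (Proposition~\ref{PropBasics}) to keep this under control.
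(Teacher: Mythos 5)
Your overall architecture is the same as the paper's: solve the eigenvalue problem explicitly on the two regions, with the $Y^{-}$ (i.e.\ $I_{\nu}$) branch inside to meet the Neumann condition and the $Y^{+}$ (i.e.\ $K_{\nu}$) branch outside for integrability and positivity, glue at $\frak{c}$, and invoke Lemma~\ref{LemExtremal} to place the maximizing cut-off at the inflection point of its own eigenfunction; your sketch for converting the inner-branch condition into the second equation of~(\ref{Krakow}) via Identity~3 and an integration by parts is also exactly the paper's computation. However, one step as you describe it would fail: the $C^1$ matching condition at a \emph{generic} $\frak{c}$ does not reduce to the first equation of~(\ref{Krakow}). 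That equation involves only the outer branch $Y^{+}$ evaluated at $\kappa/V$; via the one-sided eigenequation~(\ref{Second}) it is equivalent to $(\Delta_{n}\phi_{\frak{c}})(\frak{c}^{+})=0$, i.e.\ $x\phi'=E\phi$ at $\frak{c}$ from the right, which holds only at the optimal cut-off, not for every admissible $\frak{c}$. The true dispersion relation determining $E_{\frak{c}}$ for each $\frak{c}$ is the Wronskian condition $W\big(L^{-}_{n,E,r_{1}},L^{+}_{n,E,r_{2}}\big)(\frak{c})=0$, which couples both branches and does not collapse to the clean one-branch form; the paper uses it only qualitatively (an intermediate-value argument plus self-adjointness) to show that $E_{\frak{c}}\in(-n,0)$ exists and is unique for each $\frak{c}$. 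The repair is the paper's step (ii): at the maximizer, $\Delta_{n}\phi_{\frak{c}}$ is continuous and vanishes at $\frak{c}$, so combining this with \emph{each} of~(\ref{First}) and~(\ref{Second}) gives $E_{\frak{c}}\phi_{\frak{c}}(\frak{c})=\frak{c}\,\phi_{\frak{c}}'(\frak{c})$ from both sides; the right-hand version yields the first Krakow equation and the left-hand version the second (matching together with either one recovers the other, since all three conditions force both logarithmic derivatives at $\frak{c}$ to equal $E_{\frak{c}}/\frak{c}$).

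On uniqueness, your Hadamard-type variational argument (the $\frak{c}$-derivative of $\overline{\Sigma}(\mathcal{L}_{\frak{c}})$ proportional to a one-sided value of $(\Delta_{n}\phi_{\frak{c}})(\frak{c})$, changing sign exactly once) is plausible but unproven as stated: you would need differentiability of $\overline{\Sigma}(\mathcal{L}_{\frak{c}})$ in $\frak{c}$ and, more seriously, an argument for the single sign change, which is not automatic. The paper instead shows that any maximizer must satisfy the pair~(\ref{Krakow}) with $\kappa=\frak{c}/\sqrt{r_{1}}$ and $\eta=E_{\frak{c}}+n$, so uniqueness of the maximizing $\frak{c}$ is inherited from the fact that $(\eta(n,V),\kappa(n,V))$ is the solution of~(\ref{Krakow}) fixed in Theorem~\ref{ThmMain}. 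If you insist on your monotonicity route you must supply that missing argument; otherwise fall back on the characterization through~(\ref{Krakow}) as the paper does.
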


\begin{proof} Let $\phi_{\frak{c}}$ denote  the eigenfunction of $\mathcal{L}_{\frak{c}}$ corresponding to the principle eigenvalue $ E_{\frak{c}}:=\overline{\Sigma}\big(\mathcal{L}_{\frak{c}}\big)$.   Recall from Part (2) of Prop.~\ref{PropBasics} that $E_{\frak{c}}<0$; in fact, the analysis shows that $E_{\frak{c}}\in (-n,0)$.    In parts (i) and (ii) below, I discuss the equations determining the eigenvalue $E_{\frak{c}}$ and the additional criterion determining $\max_{\frak{c}\in \R^{+}}E_{\frak{c}}$, respectively.

\vspace{.3cm}

\noindent (i).   By Part (3) of Prop.~\ref{LemExtremal}, the values $\phi_{\frak{c}}(x)\in \C$  have a single phase for all $x\in \R$ that can be chosen to be positive.  The function $\phi_{\frak{c}}:\R^+\rightarrow \R^{+}$ satisfies the differential equation 
\begin{align}
0=&-E_{\frak{c}}\phi_{\frak{c}}(x)+x\frac{d\phi_{\frak{c}}}{d x}(x)+r_{1}\big(\Delta_{n}\phi_{\frak{c}}\big) (x)  \hspace{1.5cm}  x\leq \frak{c},\label{First}    \\   0=& -E_{\frak{c}}\phi_{\frak{c}}(x)+x\frac{d\phi_{\frak{c}}}{d x}(x)+r_{2}\big(\Delta_{n}\phi_{\frak{c}}\big) (x)   \hspace{1.5cm} x>\frak{c}  . \label{Second}
\end{align}
The fundamental solutions  to the differential equations~(\ref{First}) and~(\ref{Second}) have the form
 $L^{\pm}_{n,   E_{\frak{c} },  r  }$  for $r=r_{1}$ and $r=r_{2}$, respectively, and 
$$L^{\pm}_{n,  E, r }(x):= \int_{0}^{\infty}dy\, y^{E+n-1 }S^{\pm}_{\frac{n-2}{2} }\Big(\frac{xy}{r}\Big)e^{-\frac{x^2+y^2}{2r}}. $$

 Hence the function $\phi_{\frak{c}}$ is a linear combination of  $L^{-}_{n,   E_{\frak{c} },  r_{1}  }$, $L^{+}_{n,   E_{\frak{c} },  r_{1}  }$ over the domain $x\leq \frak{c}$ and a linear combination of $L^{+}_{ r_{2}, E_{\frak{c}}}$, $L^{-}_{ r_{2}, E_{\frak{c}}} $ over the domain $x> \frak{c}$.  In order for the function $\phi_{\frak{c}}$ to be positive, be an element of $\mathbf{D}  $, and have the  boundary condition $\lim_{x\searrow 0}x^{n-1}\frac{d\phi }{dx}(x)=0  $, it must have the following unnormalized form:  
\begin{align}\label{Form}
\phi_{\frak{c}}(x)= \left\{  \begin{array}{cc} L^{-}_{n,   E_{\frak{c} },  r_{1}  }(x)& \quad \hspace{.1cm} x\leq  \frak{c},    \\ \gamma L^{+}_{n,   E_{\frak{c} },  r_{2}  }(x)   &  \quad  x> \frak{c}, \end{array} \right.  
\end{align}
for some constant $\gamma\in \R^{+}$.   The values $\gamma$ and $E_{\frak{c}}$ are fixed by the requirement that $\phi_{\frak{c}}$ is continuously differentiable at $x=\frak{c}$.  Equivalently, $E_{\frak{c}}$ can be determined first through the Wronskian identity $W\big(L^{-}_{n,   E_{\frak{c} },  r_{1}  }, L^{+}_{n,   E_{\frak{c} },  r_{2}  }\big)(\frak{c})=0$, and then $\gamma$ is given by
$
\gamma= \frac{ L^{-}_{n,   E_{\frak{c} },  r_{1}  }(\frak{c})}{   L^{+}_{n,   E_{\frak{c} },  r_{1}  }(\frak{c})  }$. 

To see that the equation $W\big(L^{-}_{n,   E_{\frak{c} },  r_{1}  }, L^{+}_{n,   E_{\frak{c} },  r_{2}  }\big)(\frak{c})=0$ has a solution for some $E_{\frak{c}}\in(-n,0)$, notice that the Wronskian equaling zero is equivalent to 
$$H_{r_{1},r_{2}}^{(\frak{c})}	(E_{\frak{c}})=1 \hspace{1cm} \text{for} \hspace{1cm} H_{r_{1},r_{2}}^{(\frak{c})}(E):= \frac{  \frac{d L^{-}_{n,   E_{\frak{c} },  r_{1}  }  }{d x }(\frak{c})    L^{+}_{n,   E_{\frak{c} },  r_{2}  }(\frak{c})      }{ \frac{d L^{+}_{n,   E_{\frak{c} },  r_{2}  } }{d x } (\frak{c}) L^{-}_{n,   E_{\frak{c} },  r_{1}  } (\frak{c})      }  ,  $$
and notice that for any fixed $\frak{c}$, $r_{1}$, and $r_{2}$
$$ \lim_{ E\nearrow 0  }H_{r_{1},r_{2}}^{(\frak{c})}(E)= 0 \hspace{1cm} \text{and} \hspace{1cm} \lim_{ E\searrow -n }H_{r_{1},r_{2}}^{(\frak{c})}(E)= \frac{r_{2}}{r_{1}}>1.  $$
The intermediate value theorem guarantees that there exists a solution $H_{r_{1},r_{2}}^{(\frak{c})}	(E_{\frak{c}})=1$  for some $E_{\frak{c}}\in(-n,0)$ .   The solution $E_{\frak{c}} $ must be unique since otherwise it would be possible to construct two positive-valued eigenfunctions $\phi_{\frak{c},1}(x)$ and $\phi_{\frak{c},2}(x)$ for $\mathcal{L}_{\frak{c}}$ of the form~(\ref{Form}).   However,   $(\mathcal{L}^{(R)},\mathbf{D})$ is self-adjoint in the weighted Hilbert space $L^{2}(\R^{+}, w(x)dx)$  by Prop.~\ref{LemSelfAdj} so   $\phi_{\frak{c},1}(x)$ and $\phi_{\frak{c},2}(x)$ must be orthogonal, which contradicts the possibility of both functions being strictly positive.

Note that the continuous differentiability of $\phi_{\frak{c}} $ along with the equations~(\ref{First}) and~(\ref{Second}) imply that $\Delta_{n}\phi_{\frak{c}}$ is discontinuous at $x=\frak{c}$ unless $\lim_{x\rightarrow \frak{c}   } \big(\Delta_{n}\phi_{\frak{c}}\big) (x) =\big(\Delta_{n}\phi_{\frak{c}}\big) (\frak{c}) =0  $.

\vspace{.4cm}

\noindent (ii). By Lem.~\ref{LemExtremal} the parameter value $\frak{c}\in \R^{+}$ at which $E_{\frak{c}}$ is maximized also has the property that  the eigenfunction $\phi_{\frak{c}}$ has a radial inflection point at $\frak{c}$.   Recall from part (3) of Prop.~\ref{PropBasics} that the radial  inflection point of $\phi_{\frak{c}}$ must be a continuity point for $\Delta_{n} \phi_{\frak{c}}$:    $ \lim_{x\rightarrow \frak{c}   }\big(\Delta_{n} \phi_{\frak{c}}\big)  (x)=\big(\Delta_{n} \phi_{\frak{c}}\big)  (\frak{c})=0 $.  These results can be alternatively found by  combining the relations $W\big(L^{+}_{n, E_{\frak{c}}, r_{1  }}   , L^{-}_{n, E_{\frak{c}},r_{2  }}\big)(\frak{c})=0$ and $\frac{dE_{\frak{c}}}{d\frak{c}}=0$.
  The constraint  $\lim_{x\rightarrow \frak{c}   }\big(\Delta_{n}\phi_{\frak{c}}\big) (x)=0  $  and the form (\ref{Form}) yield that
 \begin{align}\label{Ribbit}
 0= \big(\Delta_{n} L^{+}_{n, E_{\frak{c}}, r_{2} }\big) (\frak{c})\hspace{1cm}\text{and}\hspace{1cm} 0= \big(\Delta_{n} L^{-}_{n, E_{\frak{c}}, r_{1} }\big) (\frak{c})  . 
  \end{align}
Moreover, combining equations~(\ref{First}) and (\ref{Second}) with~(\ref{Ribbit})  implies that  the values $\frak{c}$, $E_{\frak{c}}$ satisfy the equations
$$
 \frac{E_{\frak{c}}}{\frak{c}}L^{+}_{n, E_{\frak{c}}, r_{2} }(\frak{c})  =\frac{dL^{+}_{n, E_{\frak{c}}, r_{1  }} }{d x}(\frak{c}) \hspace{1cm}\text{and}\hspace{1cm} \frac{E_{\frak{c}}}{\frak{c}}L^{-}_{n, E_{\frak{c}}, r_{1} }(\frak{c})  =\frac{dL^{-}_{n, E_{\frak{c}}, r_{1  }} }{d x}(\frak{c}) .
$$

Denote $\kappa:=\frac{\frak{c}}{\sqrt{ r_{1}} } $, $V:= \sqrt{\frac{r_{2}}{r_{1}} } $,   $\eta:=E_{\frak{c}}+n$, $\nu=\frac{n-2}{2}$,  and $Y_{\nu, \eta }^{\pm}(\kappa):= \int_{0}^{\infty}dz  z^{\eta-1} S^{\pm}_{\nu }(z\kappa)  e^{-\frac{z^2+\kappa^2}{2}} $.    By changing  variables $\frac{y}{\sqrt{r_{1}} }\rightarrow y$ in the integrals defining $L^{-}_{n, E_{\frak{c}}, r_{1} }(\frak{c})$ and  $L^{+}_{n, E_{\frak{c}}, r_{2} }(\frak{c})$,  the above equations are equivalent to 
\begin{align}\label{NEta}
 \frac{n-\eta    }{ \frac{ \kappa  }{V}}Y_{\nu,\eta }^{+}\Big(\frac{\kappa}{V}\Big) = -\frac{dY_{\nu,\eta }^{+}}{dx}\Big(\frac{\kappa}{V}\Big) \hspace{1cm}\text{and}\hspace{1cm}  \frac{n-\eta  }{ \kappa } Y_{\nu ,\eta }^{-}(\kappa) =   -\frac{dY_{\nu,\eta }^{-}}{dx}(\kappa)  .
\end{align}
The left side of~(\ref{NEta}) is equivalent to the left side of~(\ref{Krakow}) by the identity 
$$\frac{dY_{\nu, \eta }^{-}}{dx}(x)=-x Y_{\nu, \eta }^{-}(x)+x Y_{\nu+1, \eta+2 }^{-}(x)$$     
 for $x=\frac{\kappa}{V}$.  The right side of~(\ref{NEta}) is equivalent to the right side of~(\ref{Krakow}) by using the identity above and rewriting $xY_{\nu+1, \eta+2 }^{-}(x)$ with the following:
 \begin{align*}
 x Y_{\nu+2, \eta+1 }^{-}(x)=& x \int_{0}^{\infty}dz  z^{\eta+1} S^{-}_{\nu+1 }(zx)   e^{-\frac{z^2+x^2}{2}}     \\  =&  x^{2}  \int_{0}^{\infty}dz  z^{\eta} \frac{dS^{-}_{\nu+1 }}{dx}(zx)   e^{-\frac{z^2+x^2}{2}}+x  \eta  \int_{0}^{\infty}dz z^{\eta-1} S^{-}_{\nu+1 }(zx)   e^{-\frac{z^2+x^2}{2}}   \\  =& x Y_{\nu, \eta }^{-}(x)+ (\eta-n) x \int_{0}^{\infty}dz  z^{\eta-1} S^{-}_{\nu +1 }(zx)   e^{-\frac{z^2+x^2}{2}}  \\   =&x Y_{\nu,\eta }^{-}(x)+ (\eta-n)x^{-2\nu+1}  \int_{0}^{\infty}dz z^{\eta-1}\Big(  z^{-2\nu+1} \int_{0}^{ z\kappa  }da a^{2\nu+1} S^{-}_{\nu }(a)  \Big) e^{-\frac{z^2+x^2}{2}}
\\   =&x Y_{\nu, \eta }^{-}(x)+ (\eta-n)\kappa^{-2\nu+1}  \int_{0}^{\infty}dz z^{\eta-1}\Big(   \int_{0}^{ x  }da a^{2\nu+1} S^{-}_{\nu }(za)  \Big) e^{-\frac{z^2+x^2}{2}}
\\   =&x Y_{\nu, \eta }^{-}(x)+ (\eta-n)x^{-2\nu+1} e^{-\frac{x^2}{2}}\int_{0}^{ x  }da\Big( \int_{0}^{\infty}dz z^{\eta-1}    S^{-}_{\nu }(za)  e^{-\frac{z^2+a^2}{2}}\Big) a^{2\nu+1} e^{\frac{a^2}{2}}
\\   =&x Y_{\nu, \eta }^{-}(x)+ (\eta-n)\kappa^{1-n} e^{-\frac{x^2}{2}}\int_{0}^{x}da   Y_{\nu, \eta }^{-}(a)  a^{n-1}e^{\frac{a^2}{2}},
\end{align*}
where the second equality applies integration by parts.   The third equality applies the identity $ \frac{dS^{-}_{\nu+1 }}{dx}(x)=\frac{1}{x}S^{-}_{\nu }(x) -\frac{ 2\nu+2  }{ x }S^{-}_{\nu+1 }(x) $ and $n=2\nu+2$.  The fourth applies the fundamental theorem of calculus and the identity $\frac{d}{dx}[ x^{2\nu+2}  S^{-}_{\nu+1 }(x) ]=  x^{2\nu+1}  S^{-}_{\nu }(x) $.  The fifth equality changes variables in the inner integration, and  the  sixth swaps the order of integration and rearranges terms.  The last equality uses the definitions of $\nu$ and $Y_{\nu, \eta }^{-}$.

\end{proof}

\section{Proof of Theorem~\ref{ThmMain}}

Now I am almost ready to move to the  proof of Thm.~\ref{ThmMain}, which requires finding the maximum over all $R\in B_{r_{1},r_{2}}$ of the quantity
\begin{align*}
\lim_{\epsilon\searrow 0}\left\{ n-\frac{\log\Big( \int_{ [0,   \epsilon ] }da \mathcal{P}^{(R)}_{y,T}(a) \Big)}{  \log(\epsilon) }\right\},
\end{align*}
where the integral in the $\log$ can be written in terms of the time-changed Bessel process $\mathbf{x}_{t}$ or the stationary process $Z_{s}$ as
\begin{align}\label{Gim}
\int_{[0,  \epsilon]}da \mathcal{P}^{(R)}_{y,T}(a) =  \mathbb{P}_{y}\big[  \mathbf{x}_{T}\leq \epsilon   \big]= \mathbb{E}_{b}\Big[\chi\Big( \lim_{s\rightarrow \infty}  e^{-\frac{s}{2} }  Z_{s} \leq \frac{\epsilon}{\sqrt{T}}   \Big)\Big]  
\end{align}
for $b=\frac{y}{\sqrt{T}}$.  The limit in the right  expectation  exists by the remark~(\ref{LimitEX}).  
  My strategy is to use previous results about the backwards generator $\mathcal{L}^{(R)}$ of the process $Z_{s}$, however, I have no results that are directly applicable to  the study of expectations involving the random variable $ \lim_{s\rightarrow \infty}e^{-s} Z_{s}$.  It will be convenient to use the Markov property to rewrite~(\ref{Gim})  as in the first equality below: 
\begin{align}\label{Tip}
 \int_{[0,\epsilon]}da\mathcal{P}_{y,T}^{(R)}(a)=&\int_{\R^+}da\mathcal{P}_{y,\, T- \epsilon^2}^{(R)}(a)F_{\epsilon,\, \epsilon^2}^{(R)}(a)\nonumber      \\
=&\int_{\R^+}da\psi_{y,\,s }^{(R)}(a)F_{1,1}^{(R)}(a),
\end{align}
where $s=\log\big(\frac{T}{\epsilon^2}\big)$ and  $F_{\epsilon, t}^{(R)}:\R^+\rightarrow \R^+$ is defined as 
\begin{align*}
F_{\epsilon, t}^{(R)}(a):=\mathbb{P}\big[\mathbf{x}_{T}\leq \epsilon\,\big|\,\mathbf{x}_{T-t}=a   \big]=\mathbb{E}_{\frac{a}{\sqrt{t} } } \Big[\chi\Big(\lim_{s\rightarrow \infty} e^{-\frac{s}{2}}Z_{s}\leq \frac{\epsilon}{\sqrt{t}}    \Big)  \Big]. 
\end{align*}
The second equality in~(\ref{Tip}) uses that the function $F_{\epsilon, t}^{(R)}$ has the convenient scale-invariance
\begin{align*}
\hspace{2.2cm} F_{\epsilon,t}^{(R)}(x)=F_{\lambda\epsilon,\lambda^2 t}^{(R)}(\lambda x)  \hspace{1cm}\text{for } \lambda>0.
\end{align*}
For the last equality in~(\ref{Tip}), I have changed integration variables and used that $ \psi_{y,\,s }^{(R)}(a):=\epsilon \mathcal{P}_{y,\, T-\epsilon^2}^{(R)}(\epsilon a   )$ when $\epsilon=\sqrt{T}e^{-\frac{s}{2}}$.  The bottom expression for $ \int_{[0,\epsilon]}da\mathcal{P}_{y,T}^{(R)}(a)$ in~(\ref{Tip})  has a promising similarity to the expressions appearing in part (4) of Prop.~\ref{PropBasics};   Lemma~\ref{H} below verifies the conditions of  Prop.~\ref{PropBasics}.

Notice that $F_{\epsilon, t}^{(R)}$ has the form 
$G_{h}^{(R)}(x):=\mathbb{E}_{x}\big[h\big(\lim_{s\rightarrow \infty}e^{-\frac{s}{2}}Z_{s}  \big) \big]$
for $h(a)=\chi\big(a\in [0,\frac{\epsilon}{\sqrt{t}} ] \big)$.    The following lemma shows that $G_{h}^{(R)}$ lies in the domain of  $\mathcal{L}^{(R)}$ when  $h$ is compact and smooth.   This will  be useful for the proof of Thm.~\ref{ThmMain} since  $F_{\epsilon, t}^{(R)}$ can be bounded  above and below by functions of the form  $G_{h}^{(R)}$ for  $h$ compact and smooth.

\begin{lemma}\label{H}
Let $h\in \mathbf{D}$ be smooth and have compact support and $R\in   B_{r_{1},r_{2} }$.  The function $G_{h}^{(R)}$ is an element of $\mathbf{D}$.  

\end{lemma}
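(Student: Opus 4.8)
The plan is to show directly that $G := G_{h}^{(R)}$ satisfies the three defining conditions of $\mathbf{D}$: membership in $L^{2}(\R^{+},w(x)dx)$, finiteness of $\|\Delta_{n}G\|_{2,R}$, and the Neumann-type boundary condition $\lim_{x\searrow 0}x^{n-1}\frac{dG}{dx}(x)=0$. The key structural fact is that, since $h$ is bounded, $G$ is a bounded harmonic-type function: more precisely, $G_{h}^{(R)}$ should be \emph{space-time harmonic} for the process $Z_{s}$ in the sense that $s\mapsto G(Z_{s})$ is a martingale (the functional $h(\lim_{s\to\infty}e^{-s/2}Z_{s})$ is terminal and tail-measurable), so formally $\mathcal{L}^{(R)}G=0$. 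I would first justify this rigorously: by the Markov property, $G(Z_{s}) = \mathbb{E}[h(\lim_{u\to\infty}e^{-u/2}Z_{u})\mid \mathcal{F}_{s}]$ is a bounded martingale, and standard regularity theory for the uniformly elliptic operator $\mathcal{L}^{(R)}$ (which has smooth-away-from-$\frak{c}$, bounded, uniformly elliptic coefficients once we note $r_{1}\le R\le r_{2}$) gives that $G$ is a classical solution of $x\,G'+R(x)\Delta_{n}G = 0$ on $\R^{+}$ wherever $R$ is continuous, and a $W^{2,p}_{\mathrm{loc}}$ (strong) solution across any jump of $R$. In particular $\Delta_{n}G = -\frac{x}{R(x)}G'(x)$ pointwise a.e.

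Next I would extract the behaviour of $G$ at $0$ and at $\infty$, which is where the $L^{2}$-bounds come from. Near $x=0$ the equation $x\,G' + R(x)\Delta_{n}G = 0$ combined with the fundamental solutions $v_{-}\equiv 1$, $v_{+}(x)=\int_{x}^{\infty}z^{1-n}e^{-\int_{0}^{z}v/R(v)\,dv}\,dz$ from Part~(1) of Prop.~\ref{PropBasics} forces $G$ to be a combination $\alpha v_{-} + \beta v_{+}$ locally; boundedness of $G$ near $0$ (it is bounded by $\|h\|_{\infty}$) together with $v_{+}(x)\sim x^{2-n}$ as $x\searrow 0$ kills the $v_{+}$ component for $n\in(0,2)$, so $G$ is bounded with $G'(x)=\mathit{O}(x^{n-1})$ near zero; in fact this already yields $x^{n-1}G'(x)\to 0$, the boundary condition. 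At infinity, I would use the probabilistic representation: for $x$ large the process $Z_{s}$ started at $x$ diverges to $\infty$ exponentially fast with overwhelming probability before ever coming near the support of the relevant event, so $G_{h}^{(R)}(x)\to h(\infty\cdot 0)$-type constant — more carefully, since $h$ has compact support and $\lim e^{-s/2}Z_{s}$ concentrates away from $0$ when $Z_{0}=x$ is large (the drift $Z_{s}/2$ dominates), one gets $|G(x)|$ decaying, with quantitative Gaussian-type tail bounds obtained by comparison with the constant-rate radial Ornstein--Uhlenbeck processes $\mathcal{L}_{0},\mathcal{L}_{\infty}$ of Lemma~\ref{LemExtremal}. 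These decay estimates, against the weight $w(x) = x^{n-1}R(x)^{-1}e^{\int_{0}^{x}v/R(v)\,dv}$ which grows like $x^{n-1}e^{x^{2}/(2r_{2})}$, are what must be checked to conclude $\|G\|_{2,R}<\infty$.

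Finally, for $\|\Delta_{n}G\|_{2,R}<\infty$ I would use $\Delta_{n}G(x) = -\tfrac{x}{R(x)}G'(x)$, reducing it to an $L^{2}(w)$ bound on $xG'(x)$; near zero this is $\mathit{O}(x^{n})$ which is harmless, and near infinity I would either bootstrap from the interior Schauder/$L^{p}$ estimates $\|G\|_{W^{2,p}(B(x,1))}\lesssim \|G\|_{L^{\infty}(B(x,2))}$ applied on unit balls and summed against the weight, or differentiate the ODE and run a Gronwall-type argument using the decay of $G$. The main obstacle I expect is the quantitative tail estimate at infinity: one must show $|G_{h}^{(R)}(x)|$ (and $|xG'(x)|$) decay fast enough to beat the $e^{x^{2}/(2r_{2})}$ growth of $w$. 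This is plausible because the terminal random variable $\lim e^{-s/2}Z_{s}$, viewed from a large starting point $x$, is essentially $x$ plus small fluctuations (cf.\ equation~(\ref{LimitEX})), so the event $\{\lim e^{-s/2}Z_{s}\in\operatorname{supp} h\}$ has probability with Gaussian decay in $x$; making this rigorous via a change of measure or a direct comparison with the explicitly solvable constant-coefficient cases is the technical heart of the argument, and everything else is routine elliptic regularity plus the structural results already established in Sections~3--4.
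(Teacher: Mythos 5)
There is a genuine gap at the heart of your argument: the claim that $s\mapsto G_{h}^{(R)}(Z_{s})$ is a martingale, equivalently that $\mathcal{L}^{(R)}G_{h}^{(R)}=0$, is false. The terminal functional is \emph{not} invariant under the time shift of the stationary dynamics: conditioning on $\mathcal{F}_{s}$ one has $\lim_{u\to\infty}e^{-u/2}Z_{u}=e^{-s/2}\lim_{u\to\infty}e^{-(u-s)/2}Z_{u}$, so the Markov property gives $\mathbb{E}\big[h\big(\lim_{u}e^{-u/2}Z_{u}\big)\,\big|\,\mathcal{F}_{s}\big]=G^{(R)}_{h_{s}}(Z_{s})$ with the \emph{rescaled} test function $h_{s}(a)=h(e^{-s/2}a)$, not $G^{(R)}_{h}(Z_{s})$. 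Differentiating this relation at $s=0$ (this is exactly the computation the paper performs) yields the inhomogeneous equation $\mathcal{L}^{(R)}G_{h}^{(R)}=G_{\widehat{h}}^{(R)}$ with $\widehat{h}(x)=x\frac{dh}{dx}(x)$, i.e.\ equation~(\ref{Till}), rather than $\mathcal{L}^{(R)}G_{h}^{(R)}=0$. Everything you build on harmonicity therefore collapses: the pointwise identity $\Delta_{n}G=-\frac{x}{R(x)}G'(x)$ is wrong, and the local representation of $G$ near $x=0$ as a combination of the homogeneous fundamental solutions $v_{\pm}$ (your route to the Neumann condition) is not available, since $G_{\widehat{h}}^{(R)}$ need not vanish near the origin. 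The paper's route avoids elliptic regularity altogether: once (i) is known, it applies (i) to the smooth compactly supported function $\widehat{h}$ to get $\mathcal{L}^{(R)}G_{h}^{(R)}\in L^{2}(\R^{+},w(x)dx)$, then invokes the mutual relative boundedness of $\Delta_{n}$ and $\mathcal{L}^{(R)}$ from Prop.~\ref{LemSelfAdj} for (ii), and obtains the boundary condition (iii) from the closure property of Lem.~\ref{Nill} by approximating $G_{h}^{(R)}$ in graph norm by $h_{s}(x)=\mathbb{E}_{x}\big[h\big(e^{-s/2}Z_{s}\big)\big]=\big(e^{\frac{s}{2}\mathcal{L}^{(R)}}g_{s}\big)(x)\in\mathbf{D}$.

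Separately, the part you defer as ``plausible'' --- the quantitative tail bound at infinity --- is in fact the substantive estimate for (i), not a routine check. Since the weight grows like $e^{\int_{0}^{x}v/R(v)\,dv}=e^{\frac{x^{2}}{2r_{2}}(1+o(1))}$ (here the hypothesis $R\in B_{r_{1},r_{2}}$, i.e.\ $R(z)\to r_{2}$, is essential), you need Gaussian decay of $G_{h}^{(R)}(x)$ at rate arbitrarily close to $\frac{1}{2r_{2}}$; decay at rate $\frac{1}{2r_{1}}$ or an unquantified ``Gaussian-type'' bound does not suffice. The paper gets exactly this by bounding $G_{h}^{(R)}(x)\le c\,\mathbb{P}_{x}[\tau<\infty]$ for the hitting time $\tau$ of $[0,2L]$ by $e^{-s/2}Z_{s}$, writing $e^{-s/2}Z_{s}$ as the stochastic integral~(\ref{Expon}), and applying an exponential Chebyshev bound together with the fact that $\int_{0}^{\infty}e^{-r/2}d\mathbf{B}_{r}'$ is standard Gaussian, arriving at $\mathbb{P}_{x}[\tau<\infty]\le\delta^{-1/2}e^{-\frac{1-\delta}{2r_{2}}(x-\widehat{L})^{2}}$ as in~(\ref{Tywin}). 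Your comparison-with-constant-coefficients idea could plausibly be made to work, but as written the proposal neither fixes the correct equation for $G_{h}^{(R)}$ nor supplies the decay estimate, so none of the three membership conditions is actually established.
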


\begin{proof} 
I can assume without losing generality that $h(x)$ is nonnegative.  To verify that $G_{h}^{(R)}\in \mathbf{D}$, I must show the following:
$$ (\textup{i}).\, \, G_{h}^{(R)} \in L^2(\R^+,w(x)dx)   \hspace{1cm} (\textup{ii}). \,\, \Delta_{n} G_{h}^{(R)} \in  L^2(\R^+,w(x)dx) \hspace{1cm}  (\textup{iii}).\,\,  \lim_{x\searrow 0}x^{n-1}\frac{ dG_{h}^{(R)} }{dx}(x) =0  $$

\noindent (i). Since $h$ has compact support, it is smaller than $c1_{[0,L]}$ for some $c,L>0$.    Let $\tau\in [0,\infty]$ be the hitting time that $e^{-\frac{s}{2}Z_{s}}$ reaches a value in $[0,2L]$.  I have the inequality below:
\begin{align}\label{GBound}
G_{h}^{(R)}(x):=\mathbb{E}_{x}\Big[h\Big(\lim_{s\rightarrow \infty}e^{-\frac{s}{2}}Z_{s}    \Big)\Big]\leq c\mathbb{P}_{x}\big[\tau<\infty].
\end{align}
In particular $G_{h}^{(R)}(x)$ is uniformly bounded by $c$, and the problem of showing that $ G_{h}^{(R)}\in L^2(\R^+,w(x)dx) $ reduces to showing that $\mathbb{P}_{x}\big[\tau<\infty]$ has sufficient decay as $x$ goes to infinity.  

The random variable  $e^{-\frac{s}{2}}Z_{s}$ can be written as the following stochastic integral:
\begin{align}\label{Expon}
e^{-\frac{s}{2}}Z_{s}=x+\int_{0}^{s}e^{-\frac{r}{2}}\Big( \frac{n-1}{Z_{r}}dr+\sqrt{R(Z_{r})}d\mathbf{B}_{r}'    \Big).
\end{align}
Assume $x>2L$ and let $\delta$ be picked from the interval $(0,1)$.  The integral equation~(\ref{Expon}) yields the equality below:
\begin{align}\label{Tywin}
\mathbb{P}_{x}\big[\tau<\infty]=&\mathbb{P}_{x}\bigg[\bigg| x+\int_{0}^{\tau}e^{-\frac{r}{2}}\Big(\frac{n-1}{2Z_{r}}dr+\sqrt{R(Z_{r}) }d\mathbf{B}_{r}'      \Big) \bigg|\leq 2L \bigg]\nonumber  \\
\leq &\mathbb{P}_{x}\bigg[\bigg| \int_{0}^{\tau}e^{-\frac{r}{2}}\Big(\frac{n-1}{2Z_{r}}dr+\sqrt{R(Z_{r}) }d\mathbf{B}_{r}'      \Big) \bigg|\leq x-2L \bigg] \nonumber
\\
\leq &\mathbb{P}_{x}\bigg[\bigg| \int_{0}^{\tau}e^{-\frac{r}{2}}\sqrt{R(Z_{r}) }d\mathbf{B}_{r}'      \bigg|\leq x- \widehat{L} \bigg], \nonumber
\intertext{where $\widehat{L}:=2L+\frac{|n-1|}{2L}$, and the second inequality uses that $Z_{r}\leq 2L$ for $r\in [0,\tau]$.  By Chebyshev's inequality, the above is smaller than }
\leq & e^{-\frac{1-\delta}{2r_{2}}(x-\widehat{L})^2 } \mathbb{E}_{x}\bigg[  e^{\frac{1-\delta}{2r_{2}}\big(  \int_{0}^{\tau}e^{-\frac{r}{2}}\sqrt{R(Z_{r}) }d\mathbf{B}_{r}'  \big)^2       }  \bigg]     \nonumber \\ 
\leq & \delta^{-\frac{1}{2}} e^{-\frac{1-\delta}{2r_{2}}(x-\widehat{L})^2 } .
\end{align}
The second inequality above holds because $f(x)=e^{\frac{1-\delta}{2r_{2}}x^2    }$ is a convex function and $R(z)\leq r_{2}$.   Thus the expectation in the line above~(\ref{Tywin}) is larger when $R(Z_{r})$ is replaced by $r_{2}$ and $\tau$ is replaced by $\infty$:
$$  \mathbb{E}_{x}\bigg[  e^{\frac{1-\delta}{2r_{2}}\big(  \int_{0}^{\tau}e^{-\frac{r}{2}}\sqrt{R(Z_{r}) }d\mathbf{B}_{r}'  \big)^2       }  \bigg]  \leq    \mathbb{E}_{x}\bigg[  e^{\frac{1-\delta}{2}\big(  \int_{0}^{\tau}e^{-\frac{r}{2}} d\mathbf{B}_{r}'  \big)^2       }  \bigg]        =\delta^{-\frac{1}{2} }. $$
The equality uses that $\int_{0}^{\infty}e^{-\frac{r}{2}}d\mathbf{B}_{r}'$ is a mean-zero Gaussian with variance one.  

Putting~(\ref{GBound}) and~(\ref{Tywin}) together, I have that
$$   \big\| G_{h}^{(R)}   \big\|^2_{2,R}\leq \frac{ c\delta^{-\frac{1}{2}}  }{r_{1}}\int_{\R^+}dxx^{n-1}e^{\int_{0}^{x}dy\frac{y}{R(y)} }  e^{-\frac{1-\delta}{2r_{2}}(x-L)^2 },         $$
and hence I can pick $\delta>0$ small enough so that the integral is finite since $R(y)\geq r_{1}$ and $R(y)\nearrow r_{2}$ and $y\rightarrow \infty$.

\vspace{.5cm}

\noindent (ii).  Now I will show that $\Delta_{n}G_{h}^{(R)}\in L^{2}(\R^{+},w(x)dx)$.  Notice that by inserting a conditional expectation into the formula for $G_{h}^{(R)}(x)$ we can write
\begin{align}\label{Dun}
G_{h}^{(R)}(x)=\mathbb{E}_{x}\Big[h\Big( \lim_{s\rightarrow \infty} e^{-\frac{s}{2}}Z_{s}  \Big) \Big]=&\mathbb{E}_{x}\Big[\mathbb{E}\Big[ h\Big( e^{-\frac{r}{2}} \lim_{s\rightarrow \infty} e^{-\frac{s-r}{2}}Z_{s}   \Big)  \Big| Z_{r}\Big]  \Big],   \nonumber
\intertext{and a first-order expansion for $0<r \ll 1$ leads to the order equalities  }
=&\mathbb{E}_{x}\Big[G_{h}^{(R)}(Z_{r})  -\frac{r}{2}G_{\widehat{h}}^{(R)}(Z_{r})\Big]+\mathit{O}\big(r^2\big) \nonumber \\
=&G_{h}^{(R)}(x)+\frac{ r}{2}\Big(\big( \mathcal{L}^{(R)} G_{h}^{(R)}  \big) (x)- G_{\widehat{h}}^{(R)}(x)  \Big)+ \mathit{O}\big(r^2\big),
\end{align}
where $\widehat{h}:=x\frac{dh}{dx} $.  The third equality above uses that $\frac{1}{2}\mathcal{L}^{(R)}$ generates the backward dynamics for $Z_{r}$ and 
\begin{align*}
\mathbb{E}\Big[ h\Big( e^{-\frac{r}{2}} \lim_{s\rightarrow \infty} e^{-\frac{s-r}{2}}Z_{s}   \Big)  \Big| Z_{r}  \Big]=&\mathbb{E}\Big[ h\Big( \lim_{s\rightarrow \infty} e^{-\frac{s-r}{2}}Z_{s}   \Big)  \Big| Z_{r}  \Big]-\frac{r}{2}\mathbb{E}\Big[\widehat{h}\Big( \lim_{s\rightarrow \infty} e^{-\frac{s-r}{2}}Z_{s}   \Big)  \Big| Z_{r}  \Big]+\mathit{O}\big(r^2\big) 
\\ =&G_{h}^{(R)}(Z_{r})  -\frac{r}{2}G_{\widehat{h}}^{(R)}(Z_{r})+\mathit{O}\big(r^2\big).
\end{align*}

  It follows from~(\ref{Dun}) that    
\begin{align}\label{Till}
\big( \mathcal{L}^{(R)} G_{h}^{(R)}\big)(x)=G_{\widehat{h}}^{(R)}(x).
\end{align}
The function $\widehat{h}(x)$ is smooth and has compact support by my assumptions on $h(x)$.  By the analysis above, 
$G_{h}^{(R)}$ is an element of $L^2(\R^+,w(x)dx)$.  It follows from~(\ref{Till})  that $G_{h}^{(R)}$ is in the domain of $\mathcal{L}^{(R)}$.   Finally, $\Delta_{n}G_{h}^{(R)}$ is in $L^2(\R^+,w(x)dx)$ since the operator $\Delta_{n}$ is relatively bounded to $\mathcal{L}^{(R)}$ by Prop.~\ref{LemSelfAdj}.

\vspace{.5cm}

\noindent (iii).  By Lem.~\ref{Nill}, it is sufficient to show that there are $h_{s}\in \mathbf{D}$, $s\in \R^+$ such that  as $s\rightarrow\infty$
\begin{align}\label{Jin}
 \big\|\Delta_{n}\big( h_{s}- G_{h}^{(R)}\big)\big\|_{2,R}\longrightarrow 0.
\end{align}
   The functions $h_{s}(x):=\mathbb{E}_{x}\big[h\big(e^{-\frac{s}{2}}Z_{s}    \big)\big]$ are elements in $\mathbf{D}$ since $h_{s}=e^{\frac{s}{2}\mathcal{L}^{(R)} }g_{s}$ for  $g_{s}(x):= h\big(e^{-\frac{s}{2}}x\big)\in \mathbf{D}$.  The convergence~(\ref{Jin}) can be shown by using similar arguments as above.

\end{proof}

\vspace{.2cm}

The proof  of Thm.~\ref{ThmMain} continues from the basic setup discussed at the beginning of this section

\begin{proof}[Proof of Theorem.~\ref{ThmMain}]
Let $b=\frac{y}{\sqrt{T}}$ and $s=-2\log(\epsilon)$.  By the observation~(\ref{Tip}), I have the first equality 
$\int_{[0,\epsilon]}da\mathcal{P}_{y,T}^{(R)}(a)=\int_{\R^+} dz\psi_{b,s}^{(R)}(a)F_{1,  1}^{(R)}(a) $.
Since $F_{1,1}^{(R)}(z)=G^{(R)}_{1_{[0,1]}}(z)$, the function $F_{1,1}^{(R)}(z)$ can be bounded above and below by $G^{(R)}_{h}(z)$ for $h\in \mathbf{D}$ compact and smooth.
Moreover, by Lem.~\ref{H} I have that $G^{(R)}_{h}\in \mathbf{D}$ for such $h$.  By part (4) of Prop.~\ref{PropBasics}, I have the first equality below:
$$n+\widetilde{\Sigma}\big(\mathcal{L}^{(R)}\big)=\lim_{s\rightarrow \infty}\Bigg\{ n+\frac{2\log\Big(\int_{\R^+}dz\psi_{b,s}^{(R)}(z)G_{h}^{(R)}(z)     \Big)}{  s}    \Bigg\} .  $$
It follows from the above that $n-\frac{\log\big(\int_{[0,\epsilon]}dx\mathcal{P}_{y,T}^{(R)}(x)    \big)}{\log(\epsilon)   }$ also converges to $ n+\overline{\Sigma}\big(\mathcal{L}^{(R)}\big)   $ as $\epsilon\searrow 0$.  By Lems.~\ref{LemExtremal} and~\ref{LemExtAnal}, the value $ n+\widehat{\Sigma}\big(\mathcal{L}^{(R)}\big)  $ attains the maximum $\eta(n,V)$ over all $R\in B_{r_{1},r_{2}}$ when $R$ has the form $R(x)=r_{1}\chi\big( \frac{x}{\sqrt{r_{1}}}\leq \kappa(n,V)\big)+r_{2}\chi\big( \frac{x}{\sqrt{r_{1}}} > \kappa(n,V)\big)     $.

\end{proof}

\section*{Acknowledgments}
  This work is supported by the European Research Council grant No. 227772  and  NSF grant DMS-08446325.

\end{document}